\theoremstyle{plain}
\newtheorem{theorem}{Theorem}
\newtheorem{proposition}{Proposition}
\newtheorem{corollary}{Corollary}
\theoremstyle{definition}
\newtheorem{example}{Example}
\newtheorem{remark}{Remark}
\newcommand{\G}{\Gamma}
\def \R  {{\mathbb {R}}}
\def \x {{\xi}}
\def \g {{\gamma}}
\def \t {{\tau}}
\def \m {{\mu}}
\def \p {{\partial}}
\def \F {{\mathscr{F}}}
\newcommand{\<}{\langle}
\renewcommand{\>}{\rangle}
\renewcommand{\(}{\left(}
\renewcommand{\)}{\right)}
\renewcommand{\[}{\left[}
\renewcommand{\]}{\right]}
\newcommand\Eb{\mathbb{E}}
\newcommand\Pb{\mathbb{Q}}
\newcommand\Rb{\mathbb{R}}
\newcommand\Ib{\mathbb{I}}
\newcommand\Ac{\mathscr{A}}
\newcommand\Dc{\mathscr{D}}
\newcommand\Ec{\mathscr{E}}
\newcommand\Fc{\mathscr{F}}
\newcommand\Gc{\mathscr{G}}
\newcommand\Oc{\mathscr{O}}
\newcommand\Sc{\mathscr{S}}
\newcommand\Nc{\mathscr{N}}
\newcommand\mf{\mathfrak{m}}
\newcommand\Om{\Omega}
\newcommand\sig{\sigma}
\newcommand\gam{\gamma}
\newcommand\Gam{\Gamma}
\newcommand\lam{\lambda}
\newcommand\del{\delta}
\newcommand\nub{\bar{\nu}}
\newcommand\xb{\bar{x}}
\newcommand\uh{\widehat{u}}
\newcommand\vh{\widehat{v}}
\newcommand\hh{\widehat{h}}
\renewcommand\d{\partial}
\begin{document}

\title{
A family of density expansions for L\'evy-type processes}

\author{
Matthew Lorig
\thanks{ORFE Department, Princeton University, Princeton, USA.  Work partially supported by NSF grant DMS-0739195}
\and
Stefano Pagliarani
\thanks{Dipartimento di Matematica, Universit\`a di Padova, Padova, Italy}
\and
Andrea Pascucci
\thanks{Dipartimento di Matematica,
Universit\`a di Bologna, Bologna, Italy}
}

\date{\today}

\maketitle

\centerline{\it To the memory of our dear friend and esteemed colleague Peter Laurence.}

\begin{abstract}
We consider a defaultable asset whose risk-neutral pricing dynamics are described by an
exponential L\'evy-type martingale subject to default.  This class of models allows for local
volatility, local default intensity, and a locally dependent L\'evy measure. Generalizing and
extending the novel adjoint expansion technique of \citet*{RigaPagliaraniPascucci}, we derive
{a family of asymptotic expansions} for the transition density of the underlying as well as
for European-style option prices and defaultable bond prices.  For the density
expansion, we also provide error bounds for the truncated asymptotic series.
Our method is numerically efficient; approximate transition densities and
European option prices are computed
via Fourier transforms;
approximate bond prices are computed as finite series. Additionally, as in
\cite{RigaPagliaraniPascucci}, for models with Gaussian-type jumps, approximate option prices can
be computed in closed form.
Sample Mathematica code is provided.
\end{abstract}

\noindent \textbf{Keywords}:  Local volatility; L\'evy-type process; Asymptotic expansion;
Pseudo-differential calculus; Defaultable asset

%
%

\section{Introduction and literature review}
\label{sec:intro}

A \emph{local volatility} model is a model in which the volatility $\sig_t$ of an asset $X$ is a
function of time $t$ and the present level of $X$.  That is, $\sig_t = \sig(t,X_t)$.  Among local
volatility models, perhaps the most well-known is the constant elasticity of variance (CEV) model
of \citet*{CoxCEV}.  One advantage of local volatility models is that transition densities of the
underlying -- as well as European option prices -- are often available in closed-form as infinite
series of special functions (see \citet*{linetskybook} and references therein).  Another advantage
of local volatility models is that, for models whose transition density is not available in closed
form, accurate density and option price approximations are readily available (see,
\cite{pagliarani2011analytical}, for example). Finally, \citet*{dupire1994pricing} shows that one
can always find a local volatility function $\sig(t,x)$ that fits the market's implied volatility
surface exactly.  Thus, local volatility models are quite flexible.
\par
Despite the above advantages, local volatility models do suffer some shortcomings.  Most notably, local volatility models do not allow for the underlying to experience jumps, the need for which is well-documented in literature (see \citet*{eraker} and references therein).  Recently, there has been much interest in combining local volatility models and models with jumps.  \citet*{andersen2000jump}, for
example, discuss extensions of the implied diffusion approach of \citet*{dupire1994pricing} to
asset processes with Poisson jumps (i.e., jumps with finite activity).  And \citet*{gobet-smart}
derive analytically tractable option pricing approximations for models that include local volatility and a Poisson jump process.   Their approach relies on asymptotic expansions around small diffusion and
small jump frequency/size limits.   More recently, \citet*{RigaPagliaraniPascucci} consider
general local volatility models with independent L\'evy jumps (possibly infinite activity).
Unlike, \citet{gobet-smart}, \citet{RigaPagliaraniPascucci} make no small jump intensity/size
assumption. Rather the authors construct an approximated solution by expanding the local volatility function as
a power series.  While all of the methods described in this paragraph allow for local volatility
and \emph{independent} jumps, none of these methods allow for \emph{state-dependent} jumps.
\par
Stochastic jump-intensity was recently identified as an important feature of equity models (see
\citet*{christoffersen2009}).  A locally dependent L\'evy measure allows for this possibility.
Recently, two different approaches have been taken to modeling assets with locally-dependent jump
measures.  \citet*{carr} time-change a local volatility model with a L\'evy subordinator.  In
addition to admitting exact option-pricing formulas, the subordination technique results in a
locally-dependent L\'evy measure.  \citet*{lorigCEVLevy} considers another class of models that
allow for state-dependent jumps.  The author builds a L\'evy-type processes with local volatility,
local default intensity, and a local L\'evy measure by considering state-dependent perturbations
around a constant coefficient L\'evy process.  In addition to pricing formula, the author provides
an exact expansion for the induced implied volatility surface.
\par
In this paper, we consider scalar L\'evy-type processes with regular coefficients, {which
naturally {include} all the models mentioned above}.  Generalizing and extending the
methods of \citet{RigaPagliaraniPascucci}, we derive {a family of asymptotic expansions} for the
transition densities of these processes, as well as for European-style derivative prices and
defaultable bond prices.  The key contributions of this manuscript are as follows:
\begin{itemize*}
\item We allow for a locally-dependent L\'evy measure and local default intensity, whereas \citet{RigaPagliaraniPascucci} consider a locally \emph{independent} L\'evy measure and do not allow for the possibility of default.  A state-dependent L\'evy measure is an important feature because it allows for incorporating local dependence into infinite activity L\'evy models that have no diffusion component, such as Variance Gamma {(\citet*{madan1998variance})} and CGMY/Kobol {(\citet*{levendorskiibook,CGMY})}.
\item Unlike \citet{gobet-smart}, we make no small diffusion or small jump size/intensity assumption.  Our formulae are valid for any L\'evy type process with smooth and bounded coefficients, independent of the relative size of the coefficients.
\item Whereas \citet{RigaPagliaraniPascucci} expand the local volatility and drift functions as a Taylor series about an arbitrary point, i.e. $f(x) = \sum_n a_n (x-\xb)^n $, in order to achieve their approximation result, we expand the local volatility, drift, killing rate and L\'evy measure in an arbitrary basis, i.e.  $f(x) = \sum_n c_n B_n(x)$.
This is advantageous because the Taylor series typically converges only locally, whereas other choices of the basis functions $B_n$ may provide global convergence in suitable functional spaces.
\item Using techniques from pseudo-differential calculus, we provide {explicit formulae for the Fourier transform of} every term in the transition density and option-pricing expansions. In the case of state dependent Gaussian jumps the respective inverse Fourier transforms can be explicitly computed,
thus providing closed form approximations for densities and prices. In the general case, the density and pricing approximations can be computed quickly and easily as inverse Fourier transforms.
Additionally, when considering defaultable bonds, approximate prices are computed as a finite sum; no numerical integration is required even in the general case.
\item For models with Gaussian-type jumps, we provide pointwise error estimates for transition densities.  Thus, we extend the previous results of \cite{RigaPagliaraniPascucci} where only the purely diffusive case is considered.  Additionally, our error estimates allow for jumps with locally dependent mean, variance and intensity.  Thus, for models with Gaussian-type jumps, our results also extend the results of \cite{gobet-smart}, where only the case of a constant L\'evy measure is considered.
\end{itemize*}
\par
The rest of this paper proceeds as follows. In Section, \ref{sec:model} we introduce a general
class of exponential L\'evy-type models with locally-dependent volatility, default intensity and
L\'evy measure.  We also describe our modeling assumptions. Next, in Section \ref{sec:pricing}, we
introduce the European option-pricing problem and derive a partial integro-differential equation
(PIDE) for the price of an option. In Section \ref{sec:formal} we derive a formal asymptotic
expansion (in fact, a family of asymptotic expansions) for the function that solves the option
pricing PIDE (Theorem \ref{thm:v}). Next, in Section \ref{errbou}, we provide rigorous error
estimates for our asymptotic expansion for models with Gaussian-type jumps (Theorem \ref{t1}).
Lastly, in Section \ref{sec:examples}, we provide numerical
examples that illustrate the effectiveness and versatility of our methods.
Technical
proofs are provided in the Appendix. Some concluding remarks are given in Section
\ref{sec:conclusion}.
\par
We mention specifically that the arguments needed to provide rigorous error estimates for our asymptotic expansions are quite extensive.  As such, in this manuscript, we provide only an outline of the proof of Theorem \ref{t1}.  The full proof of Theorem \ref{t1}, as well as further numerical examples, can be found in a companion paper \citet*{lpp-1.5}.

%
%

\section{General L\'evy-type exponential martingales}
\label{sec:model}

For simplicity, we assume a frictionless market, no arbitrage, zero interest rates and no
dividends.  Our results can easily be extended to include locally dependent interest rates and
dividends.  We take, as given, an equivalent martingale measure $\Pb$, chosen by the market on a
complete filtered probability space \mbox{$(\Om,\Fc,\{\Fc_t,t\geq0\},\Pb)$} satisfying the usual
hypothesis of completeness and right continuity.  The filtration $\Fc_t$ represents the history of
the market.  All stochastic processes defined below live on this probability space and all
expectations are taken with respect to $\Pb$.  We consider a defaultable asset $S$ whose
risk-neutral dynamics are given by
\begin{align}
\left.
\begin{aligned}
S_t
    &=  \Ib_{\{ \zeta > t\}} e^{X_t}, \\
dX_t
    &=  \mu(t,X_t) dt + \sig(t,X_t) dW_t + \int_\Rb d\overline{N}_t(t,X_{t-},dz) z ,  \\
d\overline{N}_t(t,X_{t-},dz)
    &=  dN_t(t,X_{t-},dz) - \nu(t,X_{t-},dz) dt, \\
\zeta
    &=  \inf \left\{ t \geq 0 : \int_0^t \gam(s,X_s) ds \geq \Ec \right\}
\end{aligned}
\right\} \label{eq:dX}
\end{align}
Here, $X$ is a L\'evy-type process with local drift function $\mu(t,x)$, local volatility function
$\sig(t,x) \geq 0$ and state-dependent L\'evy measure $\nu(t,x,dz)$.  We shall denote by $\Fc_t^X$
the filtration generated by $X$.  The random variable $\Ec \sim \text{Exp}(1)$ has an exponential
distribution and is independent of $X$.  Note that $\zeta$, which represents the default time of
$S$, is constructed here trough the so-called \textit{canonical construction} (see
\cite{bielecki2001credit}), and is the first arrival time of a doubly stochastic Poisson process
with local intensity function $\gam(t,x) \geq 0$.  This way of modeling default is {also}
considered in a local volatility setting in \citet*{JDCEV,linetsky2006bankruptcy}, and for
exponential L\'evy models in \cite{capponi}.
\par
We assume that the coefficients are measurable in $t$ and suitably smooth in $x$ to
ensure the existence of a solution to \eqref{eq:dX}  (see \citet*{oksendal2}, Theorem 1.19). We
also assume the following boundedness condition which is rather standard in the financial
applications: there exists a L\'evy measure
\begin{align}
  \nub(dz)    &:= \sup_{(t,x) \in \Rb^+ \times \Rb} \nu(t,x,dz)
\end{align}
such that
\begin{align}
\int_\Rb \ \nub(dz) \min(1,z^2)
        &<      \infty, &
\int_{|z| \geq 1}  \nub(dz) e^z
        &<      \infty, &
\int_{|z| \geq 1}  \nub(dz) |z|
        &<      \infty. \label{eq:conditions}
\end{align}

Since $\zeta$ is not $\Fc_t^X$-measurable we introduce {the filtration
$\Fc_t^D=\sigma\left(\{\zeta\le s\right), s\le t\}$} in order to keep track of the event $\{\zeta
\leq t \}$.
The filtration of a market observer, then, is $\Fc_t = \Fc_t^X \vee \Fc_t^D$. In the absence of
arbitrage, $S$ must be an {$\Fc_t$}-martingale. Thus, the drift $\mu(t,x)$ is fixed by
$\sig(t,x)$, $\nu(t,x,dz)$ and $\gam(t,x)$ in order to satisfy the martingale condition\footnote{
We provide a derivation of the martingale condition in Section \ref{sec:pricing} Remark \ref{rr11}
below. }
\begin{align}
\mu(t,x)
    &=  \gam(t,x) - a(t,x) - \int_\Rb \nu(t,x,dz) (e^z-1-z), &
a(t,x)
    &:= \frac{1}{2}\sig^2(t,x). \label{eq:drift}
\end{align}

We remark that the existence of the density of $X$ is not strictly necessary in our analysis.
Indeed, since our formulae are carried out in Fourier space, we provide approximations of the
characteristic function of $X$ and all of our computations are still formally correct even when
dealing with distributions that are not absolutely continuous with respect to the Lebesgue
measure.

%
%

\section{Option pricing}
\label{sec:pricing}

We consider a European derivative expiring at time $T$ with payoff $H(S_T)$ and we denote by $V$
its  no-arbitrage price. For convenience, we introduce
\begin{align}
h(x)
    &:= H(e^x)\qquad\text{and}\qquad K:= H(0).
\end{align}
\begin{proposition}\label{p1}
The price $V_t$ is given by
\begin{align}\label{e1}
 V_{t}
    &=      K+\Ib_{\{\zeta>t\}} \Eb \[e^{-\int_t^T \gam(s,X_s) ds} \left(h(X_T)-K\right)  | X_t \], &
 t
        &\le    T.
\end{align}
\end{proposition}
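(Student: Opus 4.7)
The plan is to first reduce the payoff to a clean form on the event $\{\zeta > T\}$ and its complement, and then invoke the standard ``canonical construction'' identity for doubly stochastic default times. Writing out $S_T = \Ib_{\{\zeta > T\}} e^{X_T}$, I notice that on $\{\zeta > T\}$ the payoff equals $H(e^{X_T}) = h(X_T)$, while on $\{\zeta \leq T\}$ it equals $H(0) = K$. Hence
\begin{align}
H(S_T) = K + \Ib_{\{\zeta > T\}}\bigl(h(X_T) - K\bigr),
\end{align}
and by risk-neutral pricing $V_t = \Eb[H(S_T) \mid \Fc_t] = K + \Eb[\Ib_{\{\zeta > T\}}(h(X_T)-K) \mid \Fc_t]$.

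The key step is then the identity
\begin{align}
\Eb\bigl[\Ib_{\{\zeta > T\}} Y \,\big|\, \Fc_t\bigr] = \Ib_{\{\zeta > t\}}\,\Eb\bigl[e^{-\int_t^T \gam(s,X_s)\,ds}\,Y \,\big|\, \Fc_t^X\bigr]
\end{align}
valid for any bounded $\Fc_T^X$-measurable $Y$. I would derive this in two stages. First, since $\Ec \sim \text{Exp}(1)$ is independent of $X$, conditioning on $\Fc_\infty^X$ and using the definition of $\zeta$ gives
\begin{align}
\Pb\bigl(\zeta > s \,\big|\, \Fc_\infty^X\bigr) = \Pb\Bigl(\Ec > \textstyle\int_0^s \gam(u,X_u)\,du \,\Big|\, \Fc_\infty^X\Bigr) = e^{-\int_0^s \gam(u,X_u)\,du}.
\end{align}
Second, I use the general fact (see e.g.\ \cite{bielecki2001credit}) that for doubly stochastic default times built this way, an $\Fc_t$-measurable event of the form $\Ib_{\{\zeta > t\}} A$ with $A \in \Fc_t^X$ generates the conditional expectation via
\begin{align}
\Eb[\Ib_{\{\zeta > T\}} Y \mid \Fc_t] = \frac{\Ib_{\{\zeta > t\}}}{\Pb(\zeta > t \mid \Fc_t^X)}\,\Eb\bigl[\Ib_{\{\zeta > T\}} Y \,\big|\, \Fc_t^X\bigr],
\end{align}
and the numerator on the right can be simplified by conditioning further on $\Fc_T^X$: $\Eb[\Ib_{\{\zeta > T\}} Y \mid \Fc_T^X] = Y \, e^{-\int_0^T \gam(s,X_s) ds}$. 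Dividing by $e^{-\int_0^t \gam(s,X_s)ds}$ leaves the desired conditional discount factor.

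Finally, since $X$ is a Markov process under $\Pb$ (being the solution of the SDE in \eqref{eq:dX}), the conditional expectation given $\Fc_t^X$ reduces to one given $X_t$, yielding \eqref{e1}. The only genuine obstacle is the credit-risk identity relating the $\Fc_t$-conditional expectation to an $\Fc_t^X$-conditional expectation with an exponential discount; this is standard but requires a careful disintegration argument, which I would simply cite from \cite{bielecki2001credit} rather than reproduce. Everything else is bookkeeping with the decomposition of the payoff and the Markov property.
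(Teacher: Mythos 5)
Your proposal is correct and follows essentially the same route as the paper: the same decomposition $H(S_T)=K+\Ib_{\{\zeta>T\}}(h(X_T)-K)$, the same key identity converting the $\Fc_t$-conditional expectation into an $\Fc_t^X$-conditional expectation with the discount factor $e^{-\int_t^T \gam(s,X_s)ds}$, and the Markov property of $X$ to pass to conditioning on $X_t$. The only cosmetic difference is that the paper imports the key identity wholesale from Corollary 7.3.4.2 of \citet{yorbook}, whereas you sketch its derivation from the canonical construction via the standard key lemma of \cite{bielecki2001credit}; both are the same standard credit-risk fact.
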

\noindent
The proof can be found in Section 2.2 of \citet{linetsky2006bankruptcy}. Because our notation
differs from that of \citet{linetsky2006bankruptcy}, and because a short proof is possible by using
the results of \citet*{yorbook},
for the reader's convenience, we provide a derivation of Proposition~\ref{p1} here.
\begin{proof}
Using risk-neutral pricing, the value $V_t$ of the derivative at time $t$ is given by the
conditional expectation of the option payoff
\begin{align}
V_t
    &=  \Eb \[ H(S_T) | \Fc_t \] \\
    &=  \Eb \[ h(X_T) \Ib_{\{\zeta>T\}} | \Fc_t \] + K  \Eb \[ \Ib_{\{\zeta \leq T\}}| \Fc_t \] \\
    &=  \Eb \[ h(X_T) \Ib_{\{\zeta>T\}} | \Fc_t \] + K - K  \Eb \[ \Ib_{\{\zeta > T\}}| \Fc_t \] \\
    &=  K+\Ib_{\{\zeta>t\}} \Eb \[e^{-\int_t^T \gam(s,X_s) ds}\left(h(X_T)-K\right)| \Fc_t^X \] \\
    &=  K+\Ib_{\{\zeta>t\}} \Eb \[e^{-\int_t^T \gam(s,X_s) ds} \left(h(X_T)-K\right)  | X_t \],
\end{align}
where we have used Corollary 7.3.4.2 from \citet*{yorbook} to write
\begin{align}
\Eb\[  (h(X_T)-K)\Ib_{\{\zeta>T\}} | \Fc_t \]
    &=  \Ib_{\{ \zeta> t\}} \Eb \[  (h(X_T)-K) e^{-\int_t^T \gam(s,X_s) ds} | \Fc_t^X \] .
\end{align}
\end{proof}
\begin{remark}\label{rr11} By Proposition \ref{p1} with $K=0$ and $h(x)=e^{x}$, we have
that the martingale condition $S_{t}=\Eb\[S_{T}|\Fc_{t}\]$
is equivalent to
\begin{align}
\Ib_{\{\zeta>t\}}e^{X_{t}}=\Ib_{\{\zeta>t\}}\Eb\[e^{-\int_{t}^{T}\gamma\left(s,X_{s}\right)
   ds+X_{T}}|\Fc_{t}\].
\end{align}
Therefore, we see that $S$ is a martingale if and only if the process $\exp\(-\int_{0}^{t}\gamma\left(s,X_{s}\right)ds+X_{t} \)$
is a martingale.  The drift condition \eqref{eq:drift} follows by applying the It\^o's formula to the process $\exp\(-\int_{0}^{t}\gamma\left(s,X_{s}\right)ds+X_{t} \)$ and setting the drift term to zero.
\end{remark}
\noindent
From \eqref{e1} one sees that, in order to compute the price of an option, we must evaluate
functions of the form\footnote{Note: we can accommodate stochastic interest rates and dividends of
the form $r_t = r(t,X_t)$ and $q_t=q(t,X_t)$ by simply making the change: $\gam(t,x) \to \gam(t,x)
+ r(t,x)$ and $\mu(t,x) \to \mu(t,X_t) + r(t,X_t) - q(t,X_t)$.}
\begin{align}\label{expectation}
v(t,x)
    &:= \Eb \[e^{-\int_t^T \gam(s,X_s) ds}h(X_T)| X_t = x \] .
\end{align}
By a direct application of the Feynman-Kac representation theorem,
see for instance \cite[Theorem 14.50]{Pascucci2011},
the classical solution of the following Cauchy problem,
\begin{align}
(\d_t + \Ac^{(t)}) v
    &=  0, &
v(T,x)
    &=  h(x), \label{eq:v.pide}
\end{align}
when it exists, is equal to the function $v(t,x)$ in \eqref{expectation}, where
\begin{align}
\Ac^{(t)} f(x) &=  \gam(t,x) ( \d_{x}f(x) - f(x) ) + a(t,x) ( \d_{x}^2 f(x) - \d_{x}f(x) )\\
           &\quad - \int_\Rb \nu(t,x,dz)(e^z-1-z)  \d_{x}f(x) + \int_\Rb \nu(t,x,dz)( f(x+z) - f(x) - z \d_{x}f(x) ) , \label{eq:A}
\end{align}
is the characteristic operator of the SDE \eqref{eq:dX}.  In order to shorten the notation, in the sequel we will suppress the explicit dependence on $t$ in $\Ac^{(t)}$ by referring to it just as $\Ac$.
\par
Sufficient conditions for the existence and uniqueness of solutions of second order elliptic
integro-differential equations are given in Theorem II.3.1 of \citet*{GarroniMenaldi}. We denote
by $p(t,x;T,y)$ the fundamental solution of the operator $(\d_t + \Ac)$, which is defined as the
solution of $\eqref{eq:v.pide}$ with $h=\del_y$. Note that $p(t,x;T,y)$ represents also the
transition density of $\log S$ \footnote{Here with $\log S$ we denote the process $X_t\Ib_{\{
\zeta > t\}} -\infty\,\Ib_{\{ \zeta \leq t\}}$.  }
\begin{align}
 p(t,x;T,y )dy =  \Pb [ \log S_T \in dy | \log S_t = x ], \qquad  x,y\in\Rb, \qquad t < T.
\end{align}
Note also that $p(t,x;T,y)$ is not a probability density since (due to the possibility that $S_T=0$) we have
\begin{align}
\int_\Rb p(t,x;T,y) dy
    \leq 1.
\end{align}
Given the existence of the fundamental solution of $(\d_t + \Ac)$, we have that for any $h$ that
is integrable with respect to the density $p(t,x;T,\cdot)$, the Cauchy problem \eqref{eq:v.pide}
has a classical solution that can be represented as
\begin{align}
v(t,x)
    &= \int_\Rb h(y) p(t,x;T,y) dy. \label{eq:v.def1}
\end{align}
\begin{remark}
If $\Gc$ is the generator of a scalar Markov process and $\text{dom}(\Gc)$ contains $\Sc(\Rb)$,
the Schwartz space of rapidly decaying functions on $\Rb$, then $\Gc$ must have the following
form:
\begin{align}
\Gc f(x)
    &=  -\gam(x) f(x) + \mu(x) \d_{x}f(x) + a(x) \d_{x}^2 f(x)+ \int_\Rb \nu(x,dz) ( f(x+z) - f(x) - \Ib_{\{|z|<R\}}z \d_{x}f(x) ), \label{eq:G}
\end{align}
where $\gam \geq 0$, $a \geq 0$, $\nu$ is a L\'evy measure for every $x$ and $R \in [0,\infty]$
(see \cite*{hoh1998pseudo}, Proposition 2.10).  If one enforces on $\Gc$ the drift and
integrability conditions \eqref{eq:conditions} and \eqref{eq:drift}, which are needed to ensure
that $S$ is a martingale, and allow setting $R=\infty$, then the operators \eqref{eq:A} and
\eqref{eq:G} coincide (in the time-homogeneous case).  Thus, the class of models we consider in
this paper encompasses \emph{all} non-negative scalar Markov martingales that satisfy the
regularity and boundedness conditions of Section \ref{sec:model}.
\end{remark}
\begin{remark}
In what follows we shall systematically make use of the language of pseudo-differential calculus.
More precisely, let us denote by
\begin{align}
\psi_\x(x) = { \psi_x(\x) }
    &=  \frac{1}{\sqrt{2\pi}}\,e^{i \x x},\qquad x,\x\in\R , \label{e5}
\end{align}
the so-called \emph{oscillating exponential function}. Then $\Ac$ can be characterized by its action on
oscillating exponential functions.  Indeed, we have
\begin{align}
\Ac \psi_\x(x)
    &=  \phi(t,x,\x ) \psi_\x(x) ,
\end{align}
where
\begin{align}\label{e6}
\phi(t,x,\x )
    &=\gam(t,x) ( i\x  - 1 ) + a(t,x) ( -\x ^2 - i \x  )\\
    &\quad- \int_\Rb \nu(t,x,dz) (e^z-1-z) i \x
     +  \int_\Rb \nu(t,x,dz) ( e^{i \x  z} - 1 - i \x  z ) ,
\end{align}
is called the \emph{symbol} of $\Ac$. Noting that
\begin{align}
 e^{z \d_{x}} u(x)
    &=  \sum_{n=0}^\infty \frac{z^n}{n!} \d_{x}^n u(x)= u(x+z),
\end{align}
for any analytic function $u(x)$, we have
\begin{align}\label{aan1}
     \int_\Rb \nu(t,x,dz)\( u(x+z) - u(x) - z \d_{x} u(x) \)= \int_\Rb \nu(t,x,dz) \left(e^{z \d_{x}} - 1 - z \d_{x}\right) u(x).
\end{align}
Then $\Ac$ can be represented as
\begin{align}
 \Ac
    =  \phi(t,x,\Dc), \qquad
 \Dc    =-i \p_{x},
\end{align}
since by \eqref{e6} and \eqref{aan1}
\begin{align}
\phi(t,x,\Dc)
    &= \gam(t,x) (\p_{x} - 1 ) + a(t,x) (\p_{x}^2 - \p_{x}) \\ &
       - \int_\Rb \nu(t,x,dz) (e^z-1-z) \p_{x}
            + \int_\Rb \nu(t,x,dz) \left(e^{z \p_{x}} - 1 - z \p_{x} \right). \label{eq:Abis}
\end{align}
If coefficients $a(t), \gam(t), \nu(t,dz)$ are independent of $x$, then we have the usual
characterization of $\Ac$ as a multiplication by $\phi$ operator in the Fourier space:
  $$\Ac =\F^{-1}\left(\phi(t,\cdot)\F\right), \qquad\qquad \phi(t,\cdot):=\phi(t,x,\cdot),$$
where $\F$ and $\F^{-1}$ denote the (direct) Fourier and inverse Fourier transform operators respectively:
\begin{align}
\F f(\x)=\hat{f}(\x)
    &:= \frac{1}{\sqrt{2\pi}}\int_\Rb e^{-i \x x} f(x)dx , &
\F^{-1} f(x)
    &= \frac{1}{\sqrt{2\pi}}\int_\Rb e^{i \x x} f(\x)d\x .
\end{align}
Moreover, if the coefficients $a, \gam, \nu(dz)$ are independent of both $t$ and $x$, then $\Ac$ is the
generator of a L\'evy process $X$ and $\phi(\cdot):=\phi(t,x,\cdot)$ is the characteristic exponent of $X$:
\begin{align}
\Eb \left[e^{i\x X_{t}}\right]
    &=  e^{t\phi(\x)}.
\end{align}
\end{remark}

%
%

\section{Density and option price expansions {(a formal description)}}
\label{sec:formal}
Our goal is to construct an approximate solution of Cauchy problem \eqref{eq:v.pide}. We assume
that the symbol of $\Ac$ admits an expansion of the form
\begin{align}\label{e10}
 \phi(t,x,\x) &=  \sum_{n=0}^\infty B_{n}(x)\phi_n(t,\x),
\end{align}
where $\phi_n(t,\x)$ is of the form
\begin{align}
\phi_{n}(t,\x)
    &=  \gam_n(t) ( i\x - 1 ) + a_n(t) (- \x^2 - i\x ) \\ &\qquad
       - \int_\Rb \nu_n(t,dz) (e^z-1-z) i\x + \int_\Rb \nu_n(t,dz) ( e^{iz \x} - 1 - iz \x ) .                              \label{e8}
\end{align}
and $\{B_{n}\}_{n\ge 0}$ is some expansion basis with $B_n$ being an analytic function for each $n
\geq 0$, and  $B_{0}\equiv1$ (see Examples \ref{ex1}, \ref{ex:two-pt} and \ref{ex:L2} below). Note
that $\phi_n(t,\x)$ is the symbol of an operator
\begin{align}\label{e41}
\Ac_{n}
    &:=\phi_n(t,\Dc), \qquad \Dc=-i \p_{x} ,
\end{align}
so that
\begin{align}
\Ac_{n} \psi_\xi(x)
    &=\phi_n(t,\x) \psi_\xi(x) . \label{e13}
\end{align}
Thus, formally the generator $\Ac$ can be written as follows
\begin{align}\label{eq:A.expand}
 \Ac=
    \sum_{n=0}^\infty B_{n}(x)\Ac_{n}.
\end{align}
Note that $\Ac_{0}$ is the generator of a time-dependent L\'evy-type process $X^{(0)}$. In the
time-independent case $X^{(0)}$ is a L\'evy process and $\phi_0(\cdot):=\phi_0(t,\cdot)$ is its
characteristic exponent.
\begin{example}[Taylor series expansion]\label{ex1}
\citet*{RigaPagliaraniPascucci} approximate the drift and diffusion coefficients of $\Ac$ as a power series about an arbitrary point $\xb \in \Rb$.  In our more general setting, this corresponds to setting $B_{n}(x)=(x-\xb)^{n}$ and expanding the diffusion and killing coefficients $a(t,\cdot)$ and $\gam(t,\cdot)$, as well as the L\'evy measure $\nu(t,\cdot,dz)$ as follows:
\begin{align}
\left. \begin{aligned}
a(t,x)
    &=  \sum_{n=0}^\infty a_n(t,\xb)B_{n}(x), &
a_n(t,\xb)
    &=  \frac{1}{n!}\d_x^n a(t,\xb), \\
\gam(t,x)
    &=  \sum_{n=0}^\infty \gam_n(t,\xb)B_{n}(x), &
\gam_n(t,\xb)
    &=  \frac{1}{n!}\d_x^n \gam(t,\xb), \\
\nu(t,x,dz)
    &=  \sum_{n=0}^\infty \nu_n(t,\xb,dz)B_{n}(x), &
\nu_n(t,\xb,dz)
    &=  \frac{1}{n!}\d_x^n \nu(t,\xb,dz).
\end{aligned} \right\} \label{e2}
\end{align}
In this case, \eqref{e10} and \eqref{eq:A.expand} become (respectively)
\begin{align}
\phi(t,x,\x)
    &=  \sum_{n=0}^\infty (x-\xb)^{n}\phi_{n}(t,\x) , &
\Ac
  &=  \sum_{n=0}^\infty (x-\xb)^{n}\phi_{n}(t,\Dc) ,
\end{align}
where, for all $n\ge 0$, the symbol $\phi_n(t,\xi)$ is given by \eqref{e8} with coefficients given by \eqref{e2}.
The choice of $\xb$ is somewhat arbitrary. However, a convenient choice that seems to work well in
most applications is to choose $\xb$ near $X_t$, the current level of $X$. Hereafter, to simplify notation, when discussing implementation of the Taylor-series expansion, we suppress the $\xb$-dependence: $a_n(t,\xb) \to a_n(t)$, $\gam_n(t,\xb) \to \gam_n(t)$
and $\nu_n(t,\xb,dz) \to \nu_n(t,dz)$.
\end{example}
\begin{example}[Two-point Taylor series expansion]\label{ex:two-pt}
Suppose $f$ is an analytic function with domain $\Rb$ and $\xb_1,\xb_2 \in \Rb$.  Then the \emph{two-point Taylor series} of $f$ is given by
\begin{align}
f(x)
    &=  \sum_{n=0}^\infty \( c_n(\xb_1,\xb_2)(x-\xb_1)+c_n(\xb_2,\xb_1)(x-\xb_2) \)
            (x-\xb_1)^n(x-\xb_2)^n , \label{eq:two-pt}
\end{align}
where
\begin{align}
c_0(\xb_1,\xb_2)
    &=  \frac{ f(\xb_2) }{\xb_2-\xb_1} , &
c_n(\xb_1,\xb_2)
    &=  \sum_{k=0}^n \frac{(k+n-1)!}{k!n!(n-k)!}
            \frac{(-1)^k k \d_{\xb_1}^{n-k}f(\xb_1) + (-1)^{n+1} n \d_{\xb_2}^{n-k}f(\xb_2)}{(\xb_1-\xb_2)^{k+n+1}} .
            \label{eq:c}
\end{align}
For the derivation of this result we refer the reader to \citet*{estes1966two,lopez2002two}.  Note
truncating the two-point Taylor series expansion \eqref{eq:two-pt} at $n=m$ results in an
expansion which of $f$ which is of order $\Oc(x^{2n+1}$).
\par
The advantage of using a two-point Taylor series is that, by considering the first $n$ derivatives
of a function $f$ at two points $\xb_1$ and $\xb_2$, one can achieve a more accurate approximation
of $f$ over a wider range of values than if one were to approximate $f$ using  $2n$ derivatives at
a single point (i.e., the usual Taylor series approximation).
\par
If we associate expansion \eqref{eq:two-pt} with an expansion of the form $f(x)=\sum_{n=0}^\infty f_n B_n(x)$ then $f_0 B_0(x)= c_n(\xb_1,\xb_2)(x-\xb_1)+c_n(\xb_2,\xb_1)(x-\xb_2)$, which is \emph{affine} in $x$.  Thus, the terms in the two-point Taylor series expansion would not be a suitable basis in \eqref{e10} since $B_0(x) \neq 1$. However, one can always introduce a constant $M$ and define a function
\begin{align}
F(x)
    &:= f(x) - M , &
    &\text{so that}&
f(x)
    &=  M + F(x) . \label{eq:M}
\end{align}
Then, one can express $f$ as
\begin{align}
f(x)
    &=  M + \sum_{n=1}^\infty \( C_{n-1}(\xb_1,\xb_2)(x-\xb_1)+C_{n-1}(\xb_2,\xb_1)(x-\xb_2) \)
            (x-\xb_1)^{n-1} (x-\xb_2)^{n-1} , \label{eq:two-pt.2}
\end{align}
where the $C_n$ are as given in \eqref{eq:c} with $f \to F$.  If we associate expansion \eqref{eq:two-pt.2} with an expansion of the form $f(x)=\sum_{n=0}^\infty f_n B_n(x)$, then we see that $f_0 B_0(x) = M$ and one can choose $B_0(x)=1$.  Thus, as written in \eqref{eq:two-pt.2}, the terms of the two-point Taylor series can be used as a suitable basis in \eqref{e10}.
\par
Consider the following case: suppose $a(t,x)$, $\gam(t,x)$ and $\nu(t,x,dz)$ are of the form
\begin{align}
a(t,x)
    &=f(x) A(t), &
\gam(t,x)
    &= f(x) \Gam(t), &
\nu(t,x,dz)
    &=  f(x) \Nc(t,dz),  \label{eq:proportional0}
\end{align}
so that $\phi(t,x,\xi)=f(x) \Phi(t,\xi)$ with
\begin{align}
\Phi(t,\xi)
    &= \Gam(t) ( i\x  - 1 ) + A(t) ( -\x ^2 - i \x  ) \\
                &\quad - \int_\Rb \Nc(t,dz) (e^z-1-z) i \x
                +  \int_\Rb \Nc(t,dz) ( e^{i \x  z} - 1 - i \x  z ) .
\end{align}
It is certainly plausible that the symbol of $\Ac$ would have such a form since, from a modeling
perspective, it makes sense that default intensity, volatility and jump-intensity would be
proportional. Indeed, the Jump-to-default CEV model (JDCEV) of \citet*{JDCEV, CarrMadan2010} has a
similar restriction on the form of the drift, volatility and killing coefficients.
\par
Now, under the dynamics of \eqref{eq:proportional0}, observe that $\phi(t,x,\xi)$ and $\Ac$ can be written as in \eqref{e10} and \eqref{eq:A.expand} respectively with $B_0=1$ and
\begin{align}
B_n(x)
    &=  \( C_{n-1}(\xb_1,\xb_2)(x-\xb_1)+C_{n-1}(\xb_2,\xb_1)(x-\xb_2) \)
            (x-\xb_1)^{n-1}(x-\xb_2)^{n-1} , \qquad n \geq 1 . \label{eq:B.two-pt}
\end{align}
As above $C_n$ (capital ``C'') are given by \eqref{eq:c} with $f \to F := f-M$ and
\begin{align}
\phi_0(t,\xi)
    &= M \Phi(t,\xi) , &
\phi_{n}(t,\xi)
    &=  \Phi(t,\xi) , \qquad n\geq 1.
\end{align}
As in example \ref{ex1}, the choice of $\xb_1$, $\xb_2$ and $M$ is somewhat arbitrary.  But, a choice that seems to work well is to set $\xb_1=X_t - \Delta$ and  $\xb_2=X_t + \Delta$ where $\Delta>0$ is a constant and $M = f(X_t)$.  It is also a good idea to check that, for a given choice of $\xb_1$ and $\xb_2$, the two-point Taylor series expansion provides a good approximation of $f$ in the region of interest.
\par
Note we assumed the form \eqref{eq:proportional0} only for sake of simplicity. Indeed, the
general case can be accommodated by suitably extending expansion \eqref{e10} to the more general
form
\begin{align}
\phi(t,x,\x)
    &=  \sum_{n=0}^\infty \sum_{i=1}^{3}B_{i,n}(x)\phi_{i,n}(t,\x) ,
\end{align}
where $\phi_{i,n}$ for $i=1,2,3$ are related to the diffusion, jump and default symbols
respectively. For brevity, however, we omit the details of the general case.
\end{example}
\begin{example}[{Non-local approximation in weighted $L^{2}$-spaces}]
\label{ex:L2}
Suppose $\{B_{n}\}_{n\ge 0}$ is a fixed orthonormal basis in some (possibly
weighted) space $L^{2}(\R,\mf(x)dx)$ and that $\phi(t,\cdot,\xi) \in L^{2}(\R,\mf(x)dx)$ for all
$(t,\xi)$. Then we can represent $\phi(t,x,\xi)$ in the form \eqref{e10} where now the $\{ \phi_n \}_{n \geq 0}$ are given by
\begin{align}
\phi_n(t,\x)&=\langle B_{n}(\cdot), \phi(t,\cdot,\x)\rangle_{\mf}, & n&\ge0.
\end{align}
A typical example would be to choose Hermite polynomials $H_n$ centered at $\xb$ as basis
functions, which (as normalized below) are orthonormal under a Gaussian weighting
\begin{align}
B_n(x)
    &=  H_n(x-\xb) , &
H_n(x)
    &:= \frac{1}{\sqrt{(2 n)!! \sqrt{\pi }}} \frac{\d_x^n \exp(-x^2)}{\exp(-x^2)} , &
n
    &\geq 0 . \label{eq:Bn.Hermite}
\end{align}
In this case, we have
\begin{align}
\phi_n(t,\x)
    &=  \< \phi(t,\cdot,\x) , B_n \>_{\mf}
    :=  \int_\Rb \phi(t,x,\x) B_n(x) \mf(x) dx , &
\mf(x)
    &:= \exp\big(-(x-\xb)^2\big) , \label{eq:phi.n.Hermite}
\end{align}
Once again, the choice of $\xb$ is arbitrary.  But, it is logical to choose $\xb$ near $X_t$, the
present level of the underlying $X$.
Note that, in the case of an $L^2$ orthonormal basis, differentiability of the coefficients $(a(t,\cdot),\gam(t,\cdot),\nu(t,\cdot,dz))$ is \emph{not} required.  This is a significant advantage over the Taylor and two-point Taylor basis functions considered in Examples \ref{ex1} and \ref{ex:two-pt}, which do require differentiability of the coefficients.
\end{example}
\par
Now, returning to Cauchy problem \eqref{eq:v.pide}, we suppose that $v=v(t,x)$ can be written as follows
\begin{align}
v
    &=  \sum_{n=0}^\infty v_n. \label{eq:v.expand}
\end{align}
Following \cite{RigaPagliaraniPascucci}, we insert expansions \eqref{eq:A.expand} and \eqref{eq:v.expand} into Cauchy problem \eqref{eq:v.pide} and find
\begin{align}
(\d_t + \Ac_0 ) v_0
    &=  0, &
v_0(T,x)
    &=  h(x), \label{eq:v.0.pide} \\
(\d_t + \Ac_0 ) v_n
    &= - \sum_{k=1}^{n} B_{k}(x) \Ac_k v_{n-k}, &
v_n(T,x)
    &=  0. \label{eq:v.n.pide}
\end{align}
We are now in a position to find the explicit expression for $\vh_n$, the Fourier transform
of $v_{n}$ in \eqref{eq:v.0.pide}-\eqref{eq:v.n.pide}.
\begin{theorem}
\label{thm:v}
Suppose $h \in L^1(\Rb,dx)$ and let $\hh$ denote its Fourier transform.  Suppose further that $v_n$ and its Fourier transform $\vh_n$ exist, and that both the left and right hand side of \eqref{eq:v.0.pide}-\eqref{eq:v.n.pide} belong to $L^1(\Rb,dx)$.
Then $\vh_n(t,\x)$ is given by
\begin{align}
\vh_0(t,\x)
    &=  \exp\left(\int_t^T  \phi_0(s, \x )ds \right) \hh(\x), \label{eq:v.hat.0} \\
\vh_n(t,\x)
    &=  \sum_{k=1}^n \int_t^T \exp\left(\int_t^s \phi_0(u,\x ) du\right)B_{k}(i \d_\x) \phi_k(s,\x) \vh_{n-k}(s,\x) ds, &
n
    &\geq 1. \label{eq:v.hat.n}
\end{align}
Note that the operator $B_{k}(i \d_\x)$ acts on everything to the right of it.
\end{theorem}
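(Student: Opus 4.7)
The plan is to Fourier transform the Cauchy problems \eqref{eq:v.0.pide}--\eqref{eq:v.n.pide} in the $x$ variable, thereby converting the PIDE system into a hierarchy of first-order linear ODEs in $t$ with $\xi$ playing the role of a parameter, and then solve these ODEs by the integrating factor method. The assumptions that $v_n,\hat v_n$ exist and that both sides of each equation lie in $L^1(\Rb,dx)$ are precisely what is needed to justify applying $\F$ term-by-term.

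First I will set up the three Fourier-side identities that do all the work. Since $\Ac_0=\phi_0(t,\Dc)$ has $x$-independent coefficients, the standard Fourier calculus gives $\F(\Ac_0 f)(\xi)=\phi_0(t,\xi)\hat f(\xi)$, and likewise $\F(\Ac_k v_{n-k})(\xi)=\phi_k(t,\xi)\hat v_{n-k}(t,\xi)$. For the $x$-dependent prefactor $B_k(x)$, I use the fact that multiplication by $x$ on the physical side corresponds to $i\partial_\xi$ on the Fourier side; extending this to the analytic function $B_k$ (formally, via its power-series expansion) yields
\begin{align}
\F\bigl(B_k(x)\,\Ac_k v_{n-k}\bigr)(\xi)
  = B_k(i\partial_\xi)\bigl[\phi_k(t,\xi)\,\hat v_{n-k}(t,\xi)\bigr].
\end{align}
Applying $\F$ to \eqref{eq:v.0.pide}--\eqref{eq:v.n.pide} then produces
\begin{align}
\partial_t \hat v_0+\phi_0(t,\xi)\hat v_0 &= 0, & \hat v_0(T,\xi) &=\hat h(\xi),\\
\partial_t \hat v_n+\phi_0(t,\xi)\hat v_n &= -\sum_{k=1}^{n} B_k(i\partial_\xi)\bigl[\phi_k(t,\xi)\hat v_{n-k}(t,\xi)\bigr], & \hat v_n(T,\xi) &= 0.
\end{align}

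Next I will solve this triangular ODE system by induction on $n$. For $n=0$ the ODE is separable in $t$, and integrating backward from $T$ using the terminal condition immediately yields $\hat v_0(t,\xi)=\exp\bigl(\int_t^T\phi_0(s,\xi)\,ds\bigr)\hat h(\xi)$, which is \eqref{eq:v.hat.0}. For $n\geq 1$, I multiply by the integrating factor $\exp\bigl(-\int_0^t\phi_0(u,\xi)\,du\bigr)$, rewrite the left-hand side as a total $t$-derivative, integrate from $t$ to $T$, and use $\hat v_n(T,\xi)=0$ to obtain \eqref{eq:v.hat.n}. The inductive hypothesis is used only to ensure that the right-hand side makes sense: $\hat v_{n-k}$ is already known from the previous step.

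The only genuinely delicate point is the action of $B_k(i\partial_\xi)$ when $B_k$ is merely analytic rather than polynomial. When $B_k$ is polynomial (the Taylor and two-point Taylor cases of Examples \ref{ex1}--\ref{ex:two-pt}) the identity is routine, and the $L^1$ hypotheses on the right-hand side of \eqref{eq:v.n.pide} make the interchange of $\F$ and differentiation rigorous via integration by parts. For general analytic $B_k$, the identity is interpreted as the formal power series $\sum_j c_j (i\partial_\xi)^j$ applied termwise; the hypothesis that both sides of \eqref{eq:v.n.pide} lie in $L^1(\Rb,dx)$ ensures that the resulting Fourier-side series is well-defined and represents the transform of $B_k(x)\Ac_k v_{n-k}$. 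With this interpretation in place, the derivation above goes through, establishing \eqref{eq:v.hat.0}--\eqref{eq:v.hat.n}.
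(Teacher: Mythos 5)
Your proposal is correct and takes essentially the same route as the paper's proof in Appendix A: Fourier transform each Cauchy problem \eqref{eq:v.0.pide}--\eqref{eq:v.n.pide}, use the identity $\F\bigl(B_k(\cdot)\,\Ac_k v_{n-k}(t,\cdot)\bigr)(\x)=B_k(i\p_\x)\bigl(\phi_k(t,\x)\vh_{n-k}(t,\x)\bigr)$ (which the paper obtains by writing $B_k(x)e^{-i\x x}=B_k(i\p_\x)e^{-i\x x}$ and pulling the operator outside the integral), and then solve the resulting triangular hierarchy of linear ODEs in $t$ with terminal data; the only difference is that the paper verifies \eqref{eq:v.hat.0}--\eqref{eq:v.hat.n} by substitution while you derive them via an integrating factor. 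One small slip: the integrating factor for $\p_t\vh_n+\phi_0\vh_n=F$ should be $\exp\bigl(+\int^t\phi_0(u,\x)\,du\bigr)$, not $\exp\bigl(-\int_0^t\phi_0(u,\x)\,du\bigr)$ (with your sign the left-hand side is not a total $t$-derivative), but this does not affect the validity of the argument or the final formulas.
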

\begin{proof}
See Appendix \ref{sec:un}.
\end{proof}
\begin{remark}
To compute survival probabilities $v(t,x)=v(t,x;T)$ over the interval $[t,T]$, one assumes a payoff function $h(x) = 1$.  Note that the Fourier transform of a constant is simply a Dirac delta function:
$\hh(\x)=\delta(\x)$.  Thus, when computing survival probabilities, (possibly defaultable) bond prices and credit
spreads, no numerical integration is required.  Rather, one simply uses the following identity
\begin{align}
\int_\Rb \uh(\x) \d_\xi^n \del(\x) d\xi
    &=  (-1)^n \d_\xi^n \uh(\xi)|_{\xi=0}.
\end{align}
to compute inverse Fourier transforms.
\end{remark}
\begin{remark}
Assuming $\vh_n \in L^1(\Rb,dx)$, one recovers $v_n$ using
\begin{align}
v_n(t,x)
    &=  \int_\Rb d\xi\frac{1}{\sqrt{2\pi}}e^{i \x x} \vh_n(t, \xi). \label{eq:vn.FT}
\end{align}
As previously mentioned, to obtain the FK transition densities $p(t,x;T,y)$ one simply sets $h(x)
= \del_y(x)$.  In this case, $\hh(\xi)$ becomes $\psi_y(-\xi)$.
\end{remark}

\noindent When the coefficients $(a,\gam,\nu)$ are time-homogeneous, then the results of Theorem
\ref{thm:v} simplify considerably, as we show in the following corollary.
\begin{corollary}[Time-homogeneous case]
\label{thm:u} Suppose that $X$ has time-homogeneous dynamics with the local variance, default
intensity and L\'evy measure given by $a(x)$, $\gam(x)$ and $\nu(x,dz)$ respectively.  Then the
symbol $\phi_n(t,\x)=\phi_n(\x)$ is independent of $t$.  Define
\begin{equation}
\tau(t):= T-t.
\end{equation}
Then, for $n\leq 0$ we have
\begin{equation}
v_n(t,x)=u_n(\tau(t),x)
\end{equation}
where
\begin{align}
\uh_0(\tau,\x)
    &=  e^{\tau \phi_0(\x)} \hh(\x), \label{eq:u0.hat} \\
\uh_n(\tau,\x)
    &=  \sum_{k=1}^n \int_0^\tau e^{(\tau-s)\phi_0(\x)}B_{k}(i \d_\x) \phi_k(\x) \uh_{n-k}(s,\x)ds , &
n
    &\geq 1. \label{eq:uhatn}
\end{align}
\end{corollary}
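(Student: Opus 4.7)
The plan is to derive the corollary directly from Theorem~\ref{thm:v} by exploiting time-homogeneity and a simple change of variables $\tau = T - t$, then proceeding by induction on $n$.

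First I would observe that time-homogeneity of $(a,\gam,\nu)$ forces time-homogeneity of each coefficient $a_n, \gam_n, \nu_n$ appearing in any of the expansions (Taylor, two-point Taylor, or $L^2$-orthonormal), and hence by the definition \eqref{e8} the symbol $\phi_n(t,\x)=\phi_n(\x)$ is independent of $t$. This is the only structural input beyond Theorem~\ref{thm:v}.

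For the base case, I would start from \eqref{eq:v.hat.0}. Since $\phi_0$ has no $s$-dependence,
\begin{align}
\int_t^T \phi_0(s,\x)\,ds \;=\; (T-t)\phi_0(\x) \;=\; \tau(t)\phi_0(\x),
\end{align}
so $\vh_0(t,\x) = e^{\tau(t)\phi_0(\x)}\hh(\x)$, which is exactly $\uh_0(\tau(t),\x)$ as defined in \eqref{eq:u0.hat}. Inverting the Fourier transform gives $v_0(t,x)=u_0(\tau(t),x)$.

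For the inductive step, assume $v_{n-k}(t,x)=u_{n-k}(\tau(t),x)$ for $1\le k\le n$, equivalently $\vh_{n-k}(s,\x)=\uh_{n-k}(T-s,\x)$. Starting from \eqref{eq:v.hat.n} and using that $\phi_0,\phi_k$ are $t$-independent, I would change variables in the time integral via $r = T-s$ (so $ds = -dr$, $s-t = \tau(t)-r$, and the endpoints $s=t,T$ become $r=\tau(t),0$):
\begin{align}
\vh_n(t,\x)
  &= \sum_{k=1}^n \int_t^T e^{(s-t)\phi_0(\x)}\, B_k(i\d_\x)\phi_k(\x)\, \vh_{n-k}(s,\x)\,ds \\
  &= \sum_{k=1}^n \int_0^{\tau(t)} e^{(\tau(t)-r)\phi_0(\x)}\, B_k(i\d_\x)\phi_k(\x)\, \uh_{n-k}(r,\x)\,dr
   \;=\; \uh_n(\tau(t),\x),
\end{align}
where the last equality is the definition \eqref{eq:uhatn}. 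Inverting the Fourier transform yields $v_n(t,x)=u_n(\tau(t),x)$, completing the induction.

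There is no real obstacle here: the only subtle point is making sure the operator $B_k(i\d_\x)$, which acts in $\x$ only, commutes with the change of variables in $s$ (it does, trivially, because it acts on everything to its right pointwise in $s$), and verifying that the integrability assumptions carried over from Theorem~\ref{thm:v} are preserved under the reparametrization, which they are since $\tau\mapsto T-\tau$ is a bijection of $[0,T-t]$ onto $[t,T]$.
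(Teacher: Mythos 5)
Your argument is correct: time-homogeneity makes each $\phi_n$ independent of $t$, the exponentials in Theorem~\ref{thm:v} collapse to $(s-t)\phi_0(\x)$, and the substitution $r=T-s$ together with induction gives exactly \eqref{eq:u0.hat}--\eqref{eq:uhatn}. This is precisely the ``algebraic computation'' the paper alludes to but omits, so your proof coincides with the intended one.
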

\begin{proof}
The proof is an algebraic computation.  For brevity, we omit the details.
\end{proof}
\begin{example}
\label{ex:twopt.explicit} Consider the Taylor density expansion of Example \ref{ex1}. That is,
$B_n(x)=(x-\xb)^n$. Then, in the time-homogeneous case, we find that
$\uh_1(t,\x)$ and $\uh_2(t,\x)$ are given explicitly by
\begin{align}
\uh_1(t,\x)
    &=  e^{t \phi_0(\x  )} \( t \hh(\x  ) \xb \phi_1(\x) + i t \phi_1(\x  ) \hh'(\x)
            + \frac{1}{2} i t^2 \hh(\x) \phi_1(\x  ) \phi_0'(\x  )+i t \hh(\x) \phi_1'(\x) \), \label{eq:u1.hat} \\
\uh_2(t,\x)
    &=  e^{t \phi_0(\x  )} \bigg( \frac{1}{2} t^2 \hh(\x  ) \xb^2 \phi_1^2(\x  )
            +t \hh(\x  ) \xb^2 \phi_2(\x  )
            -i t^2 \xb \phi_1^2(\x  ) \hh'(\x  )
            -2 i t \xb \phi_2(\x  ) \hh'(\x  ) \\ & \qquad
            -\frac{1}{2} i t^3 \hh(\x  ) \xb \phi_1^2(\x  ) \phi_0'(\x  )
            -i t^2 \hh(\x  ) \xb \phi_2(\x  ) \phi_0'(\x  )
            -\frac{1}{2} t^3 \phi_1(\x  ){}^2 \hh'(\x  ) \phi_0'(\x  )
            -t^2 \phi_2(\x  ) \hh'(\x  ) \phi_0'(\x  )  \\ & \qquad
            -\frac{1}{8} t^4 \hh(\x  ) \phi_1^2(\x  ) (\phi_0'(\x  ))^2
            -\frac{1}{3} t^3 \hh(\x  ) \phi_2(\x  ) (\phi_0'(\x  ))^2
            -\frac{3}{2} i t^2 \hh(\x  ) \xb \phi_1(\x  ) \phi_1'(\x  ) \\ & \qquad
            -\frac{3}{2} t^2 \phi_1(\x  ) \hh'(\x  ) \phi_1'(\x  )
            -\frac{2}{3} t^3 \hh(\x  )\phi_1(\x  )\phi_0'(\x  ) \phi_1'(\x  )
            -\frac{1}{2} t^2 \hh(\x  ) (\phi_1'(\x  ))^2
            -2 i t \hh(\x  ) \xb \phi_2'(\x  ) \\ &\qquad
            -2 t \hh'(\x  ) \phi_2'(\x  )
            -t^2 \hh(\x  ) \phi_0'(\x  ) \phi_2'(\x  )
            -\frac{1}{2} t^2 \phi_1(\x  ){}^2 \hh''(\x  )
            -t \phi_2(\x  ) \hh''(\x  )
            -\frac{1}{6} t^3 \hh(\x  ) \phi_1^2(\x  ) \phi_0''(\x  ) \\ &\qquad
            -\frac{1}{2} t^2 \hh(\x  ) \phi_2(\x  ) \phi_0''(\x  )
            -\frac{1}{2} t^2 \hh(\x  ) \phi_1(\x  ) \phi_1''(\x  )-t \hh(\x  ) \phi_2''(\x  ) \bigg). \label{eq:u2.hat}
\end{align}
Higher order terms are quite long. However, they can be computed quickly and explicitly using the
Mathematica code provided in Appendix \ref{sec:mathematica}.  The code in the Appendix can be easily modified for use with other basis functions.
\end{example}

\begin{remark}
As in \citet{RigaPagliaraniPascucci}, when considering models with Gaussian-type jumps, i.e.,
models with a state-dependent L\'evy measure $\nu(t,x,dz)$ of the form \eqref{eqbound3} below, all
terms in the expansion for the transition density become explicit.  Likewise, for models with
Gaussian-type jumps, all terms in the expansion for the price of an option are also explicit,
assuming the payoff is integrable against Gaussian functions.
\end{remark}

\begin{remark}
Many common payoff functions (e.g. calls and puts) are not integrable: $h \notin L^1(\Rb,dx)$.  Such payoffs may sometimes be accommodated using \emph{generalized} Fourier transforms.  Assume
\begin{align}
\hh(\x)
    &:= \int_\Rb dx \frac{1}{\sqrt{2\pi}}e^{-i \x  x} h(x) < \infty, &
    &\text{for some $\x  = \x_r + i \x_i$ with $\x_r, \x_i \in \Rb$}.
\end{align}
Assume also that $\phi(t,x,\x_r + i \x_i)$ is analytic as a function of $\x_r$.
Then the formulas appearing in Theorem \ref{thm:v} and Corollary \ref{thm:u} are valid and
integration in \eqref{eq:vn.FT} is with respect to $\x_r$ (i.e., $d\x  \to
d\x_r$).  For example, the payoff of a European call option with payoff function
$h(x)=(e^x-e^k)^+$ has a generalized Fourier transform
\begin{align}
\hh(\x)
        &=      \int_\Rb dx \tfrac{1}{\sqrt{2\pi}}e^{-i \x  x} (e^x - e^k)^+
        =           \frac{-e^{k-i k \x }}{\sqrt{2 \pi } \(i \x  + \x ^2\)},  &
\x
    &=  \x _r + i \x _i, &
\x _r
    &\in \R, &
\x _i
    &\in (-\infty, -1) . \label{eq:hhat}
\end{align}
\end{remark}
In any practical scenario, one can only compute a finite number of terms in \eqref{eq:v.expand}.  Thus, we define $v^{(N)}$, the $N$th order approximation of $v$ by
\begin{align}
v^{(N)}(t,x)
    &=  \sum_{n=0}^N v_n(t,x)
    =       \int_\Rb d\x  \frac{1}{\sqrt{2\pi}}e^{i \x  x} \vh^{(n)}(t, \x ) , &
\vh^{(N)}(t,\x)
    &:= \sum_{n=0}^N \vh_n(t,\x), &
\end{align}
The function $u^{(N)}(t,x)$ (which we use for time-homogeneous cases) and the approximate FK transition density $p^{(N)}(t,x;T,y)$ are defined in an analogous fashion.

%
%

\section{Pointwise error bounds for Gaussian models}\label{sec:errors}
\label{errbou}

In this section we state some pointwise error estimates for $p^{(N)}(t,x;T,y)$, the $N$th order
approximation of the FK density of $(\p_{t}+\Ac)$ with $\Ac$ as in \eqref{eq:A}. Throughout this
Section, we assume Gaussian-type jumps with $(t,x)$-dependent mean, variance and jump intensities.
Furthermore, we work specifically with the Taylor series expansion of Example \ref{ex1}.  That is,
we use basis functions $B_n(x)=(x-\xb)^n$.
\begin{theorem}\label{t1}
Assume that
\begin{equation}
m\leq a(t,x)\leq M,\qquad 0\leq \gamma(t,x)\leq M, \qquad t\in[0,T],\ x\in\R,
\end{equation}
for some positive constants $m$ and $M$, and that
\begin{align}\label{eqbound3}
\nu(t,x,dz) &=\lam(t,x)\, \mathscr{N}_{\m(t,x),\delta^2(t,x)}(dz)
 :=\frac{\lam(t,x)}{\sqrt{2\pi}\delta(t,x)}e^{-\frac{(z-\m(t,x))^2}{2\delta^2(t,x)}}dz,
\end{align}
with
\begin{equation}
 m\leq  \delta^2(t,x)\leq M,\qquad 0\leq \lambda(t,x),|\mu(t,x)|\leq M, \qquad t\in[0,T],\ x\in\R.
\end{equation}
Moreover assume that $a,\g,\lambda,\delta,\mu$ and their $x$-derivatives are bounded and Lipschitz
continuous in $x$, and uniformly bounded with respect to $t\in[0,T]$. Let $\xb=y$ in \eqref{e2}.
Then, for $N\ge 1$, we have\footnote{Here $\left\|  \partial_x \nu  \right\|_{\infty}:=\max\{
\left\|
\partial_x\lambda  \right\|_{\infty},\left\|  \partial_x\delta  \right\|_{\infty},\left\|
\partial_x \mu  \right\|_{\infty} \}$, where $\|  \cdot  \|_{\infty}$ denotes the sup-norm on
$(0,T)\times\R$. Note that $\left\|\partial_{x}\nu\right\|_{\infty}=0$ if $\lambda,\delta,\mu$ are
constants.}
\begin{equation}\label{eqbound10}
 \left|p(t,x;T,y) - p^{(N)}(t,x;T,y)\right|\leq   g_{N}(T-t) \left(\bar{\G}(t,x;T,y)+\left\|  \partial_x \nu  \right\|_{\infty} \widetilde{\G}(t,x;T,y) \right),
\end{equation}
for any $x,y \in \Rb$ and $t< T$,  where
\begin{equation}
 g_{N}(s)=\Oc \left(s\right),\quad \textrm{as } s \to 0^+ .
\end{equation} Here, the function $\bar{\G}$ is the fundamental solution of the
constant coefficients jump-diffusion operator
\begin{align}\label{eq:integro_diff_constant}
  &\p_t u(t,x)+\frac{\bar{M}}{2}\p_{xx} +\bar{M}\int_{\R} \left( u(t,x+z)-u(t,x)\right) \mathscr{N}_{\bar{M},\bar{M}}(dz),
\end{align}
where $\bar{M}$ is a suitably large constant, and $\widetilde{\G}$ is defined as
\begin{equation}
\widetilde{\G}(t,x;T,y)=\sum_{k=0}^{\infty}
\frac{\bar{M}^{k/2}(T-t)^{k/2}}{\sqrt{k!}}
\mathscr{C}^{k+1}\bar{\G}(t,x;T,y),
\end{equation}
and where $\mathscr{C}$ is the convolution operator acting as
\begin{equation}
\mathscr{C}f(x)=\int_{\R}f(x+z)\mathscr{N}_{\bar{M},\bar{M}}(dz).
\end{equation}
\end{theorem}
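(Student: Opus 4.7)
The plan is to introduce the error $e^{(N)}(t,x;T,y) := p(t,x;T,y) - p^{(N)}(t,x;T,y)$ and derive the PIDE it satisfies. Substituting $v^{(N)} = \sum_{n=0}^{N} v_n$ into $(\d_t + \Ac) v = 0$ and using the recursive system \eqref{eq:v.0.pide}--\eqref{eq:v.n.pide} together with the symbol expansion \eqref{eq:A.expand} (with $B_k(x) = (x-\xb)^k$ and $\xb=y$), all terms with total order $\leq N$ cancel, leaving a residual
\begin{equation}
R_{N}(s,x;T,y) := -(\d_s + \Ac) v^{(N)}(s,x) = -\sum_{n=0}^{N}\sum_{k=N+1-n}^{\infty}(x-y)^{k}\Ac_{k} v_{n}(s,x).
\end{equation}
Since each term in $R_N$ carries at least the factor $(x-y)^{N+1}$, the error admits the Duhamel representation
\begin{equation}
 e^{(N)}(t,x;T,y) = \int_{t}^{T}\!\!\int_{\Rb} p(t,x;s,\zeta)\, R_{N}(s,\zeta;T,y)\, d\zeta\, ds,
\end{equation}
so the task reduces to (i) a pointwise bound on $R_N$ and (ii) a Gaussian-type bound on the unknown kernel $p$.

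For step (ii), under the standing bounds on $a,\gamma,\lam,\mu,\del$ and the Gaussian jump hypothesis \eqref{eqbound3}, one can compare the fundamental solution of $\d_t+\Ac$ with that of a constant-coefficient jump-diffusion \eqref{eq:integro_diff_constant} using a standard parametrix / maximum-principle argument for integro-differential operators: choosing $\bar{M}$ large enough to dominate every coefficient and every jump intensity/variance yields a pointwise inequality $p(t,x;T,y)\le C\,\bar{\G}(t,x;T,y)$. The same envelope controls all the iterates $v_n$ appearing in $R_N$, since each $\vh_n$ is (by Theorem \ref{thm:v}) a polynomial in $\x$ times $\vh_0$, and Plancherel-plus-Gaussian-moment bounds translate polynomial growth in $\x$ into powers of $(T-t)^{1/2}$ and negative powers of the Gaussian width, giving the Gaussian weight $\bar\G$.

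For step (i), I would split $\Ac_k$ into its local part (drift/diffusion/killing) and its jump part. The local part acting on $v_n$ produces Gaussian-type terms, and the prefactor $(x-y)^{N+1}$ kills one power of the Gaussian decay, producing the linear-in-$(T-t)$ factor that becomes $g_N(T-t) = \Oc(T-t)$. The jump part is more delicate: $\Ac_k$ evaluated on $v_n$ shifts the argument by $z$, and these shifts only generate new Gaussian convolutions when the jump parameters genuinely depend on $x$, because only then do $x$-derivatives of $\nu(t,x,dz)$ survive through the Taylor coefficients $\nu_k$. Each Duhamel iteration converts a shift into one extra convolution against $\mathscr{N}_{\bar M,\bar M}$, multiplied by a factor proportional to $\|\d_x \nu\|_\infty$ and a Gaussian-moment factor $\bar M^{1/2}(T-t)^{1/2}$. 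Summing the geometric iteration yields exactly the series defining $\widetilde{\G}$, and the absence of $x$-dependence in $\nu$ makes the whole jump correction drop out via the prefactor $\|\d_x\nu\|_\infty$.

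The main obstacle is the combinatorial/analytic bookkeeping in the jump iteration: one must show that the proliferation of shifted Gaussians generated by repeated application of the $x$-dependent jump operator is summable, which is why the series in $\widetilde{\G}$ has the specific factorial-weighted form $\bar{M}^{k/2}(T-t)^{k/2}/\sqrt{k!}$. Proving convergence of that series and verifying that, after integration against $p$, the resulting bound still respects the factorized form $g_N(T-t)\bigl(\bar{\G} + \|\d_x\nu\|_{\infty}\widetilde{\G}\bigr)$ with $g_N(s) = \Oc(s)$ requires delicate Gaussian estimates on iterated convolutions and on $x$-derivatives of $\bar{\G}$. This is precisely the technical core that, as the authors note, is fully carried out in the companion paper \citet{lpp-1.5}.
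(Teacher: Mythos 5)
Your plan is related to the paper's argument but, as written, it contains a genuine gap at its central step. You represent the error as a one-shot Duhamel integral $e^{(N)}(t,x;T,y)=\int_t^T\!\!\int_{\Rb}p(t,x;s,\zeta)R_N(s,\zeta;T,y)\,d\zeta\,ds$ with the \emph{unknown} density $p$ as kernel, and then assert the pointwise envelope $p\le C\,\bar{\G}$ ``by a standard parametrix / maximum-principle argument''. That bound is not standard for variable-coefficient nonlocal operators with state-dependent Gaussian jump measures: Aronson-type upper bounds in this setting are themselves obtained precisely by the Levi parametrix construction you are trying to avoid, and a maximum principle alone does not produce a Gaussian-plus-iterated-convolution envelope of the form $\bar{\G}+\|\partial_x\nu\|_\infty\widetilde{\G}$. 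The paper sidesteps this entirely: it writes the \emph{true} solution as $p=p^{(1)}+\int_t^T\!\!\int_{\R}p^{(0)}(t,x;s,\x)\Phi(s,\x;T,y)\,d\x\,ds$, where the kernel is the \emph{known} zeroth-order density $p^{(0)}$ (explicit compound-Poisson-with-Gaussian-jumps plus heat kernel), and $\Phi=\sum_{n\ge0}Z_n$ with $Z_0=Lp^{(1)}$ and $Z_{n+1}=\int_t^T\!\!\int_{\R}Lp^{(0)}\,Z_n$. The only pointwise inputs are the elementary bound $p^{(0)}\le C\,\bar{\G}$ (Proposition \ref{lemestim1}) and the iterate bounds of Proposition \ref{properr3}; no bound on $p$ itself is ever needed, so your step (ii) both assumes what is essentially the hard part and is unnecessary in the paper's route.

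Two further points. First, your residual $R_N=-\sum_{n=0}^N\sum_{k\ge N+1-n}(x-y)^k\Ac_k v_n$ presupposes that the coefficients coincide with a globally convergent Taylor series in $x$; the theorem only assumes $a,\g,\lam,\delta,\m$ and their first $x$-derivatives are bounded and Lipschitz, so the residual must be treated as the full operator applied to the approximation (as in $Z_0=Lp^{(1)}$), or via Taylor remainders, not as an infinite Taylor tail. Second, the specific structure of $\widetilde{\G}$ — iterated convolutions $\mathscr{C}^{k+1}\bar{\G}$ with weights $\bar{M}^{k/2}(T-t)^{k/2}/\sqrt{k!}$ — arises in the paper from the Levi iteration $Z_{n+1}=\int\!\!\int Lp^{(0)}Z_n$ combined with the semigroup identity $\int_{\R}\mathscr{C}^{k}\bar{\G}(t,x;s,\x)\,\mathscr{C}^{m}\bar{\G}(s,\x;T,y)\,d\x=\mathscr{C}^{k+m}\bar{\G}(t,x;T,y)$; in a single Duhamel integral there is no iteration, so your heuristic that ``each Duhamel iteration converts a shift into one extra convolution'' has no source in your own scheme and implicitly reintroduces exactly the parametrix iteration you omitted. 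To repair the proposal you would essentially have to adopt the paper's structure: take $p^{(N)}$ as parametrix, expand the correction as the series $\sum_n Z_n$, prove the analogues of Propositions \ref{lemestim1} and \ref{properr3}, and sum using the semigroup property.
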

\begin{proof}
An outline of the proof is provided in Appendix \ref{sec:errorproof}. For a detailed proof we refer to \cite{lpp-1.5}.
\end{proof}
\begin{remark}\label{remt1}
The functions $\mathscr{C}^k \bar{\G}$ take the
following form
\begin{align}\label{andconv}
\mathscr{C}^{k}\bar{\G}(t,x;T,y)&=e^{-\bar{M}(T-t)}\sum_{n=0}^{\infty}\frac{\big(\bar{M}
(T-t)\big)^n}{n!\sqrt{2\pi \bar{M} (T-t+n+k)}}\, \exp\left(-\frac{\left(x-y
+\bar{M}(n+k)\right)^2}  {2\bar{M}(T-t+n+k)}\right),\qquad  k\ge 0,
\end{align}
and therefore $\widetilde{\G}$ can be explicitly written as
\begin{equation}
\widetilde{\G}(t,x;T,y)=e^{-\bar{M}(T-t)}\sum_{n,k=0}^{\infty} \frac{\big(\bar{M}
(T-t)\big)^{n+\frac{k}{2}} }{n!\sqrt{k!}\sqrt{2\pi \bar{M} (T-t+n+k+1)}}  \,
\exp\left(-\frac{\left(x-y +\bar{M}(n+k+1)\right)^2}
  {2\bar{M}(T-t+n+k+1)}\right).
\end{equation}
\end{remark}
By Remark \ref{remt1}, it follows that, when $k=0$ and $x\neq y$, the asymptotic
behaviour as $t\to T$ of the sum in \eqref{andconv} depends only on the $n=1$ term.  Consequently, we have
$\bar{\G}(t,x;T,y)=\Oc (T-t)$ as $(T-t)$ tends to $0$. On the other hand, for $k\ge 1$,
$\mathscr{C}^{k}\bar{\G}(t,x;T,y)$, and thus also $\widetilde{\G}(t,x;T,y)$, tends to a positive
constant as $(T-t)$ goes to $0$. It is then clear by \eqref{eqbound10} that, with $x\neq y$ fixed,
the asymptotic behavior of the error, when $t$ tends to $T$, changes from $(T-t)$ to $(T-t)^2$
depending on whether the L\'evy measure is locally-dependent or not.

Theorem \ref{t1} extends the previous results in \cite{RigaPagliaraniPascucci} where only the
purely diffusive case (i.e $\lambda \equiv 0$) is considered. In that case an estimate analogous to
\eqref{eqbound10} holds with
 $$g_{N}(s)=\Oc \left(s^{\frac{N+1}{2}}\right),\quad \textrm{as } s \to 0^+.$$
Theorem \ref{t1} shows that for jump processes, increasing the order of the expansion for $N$
greater than one, theoretically does not give any gain in the rate of convergence of the
asymptotic expansion as $t \to T^-$; this is due to the fact that the expansion is based on a
local (Taylor) approximation while the PIDE contains a non-local part. This estimate is in accord
with the results in \cite{gobet-smart} where only the case of constant L\'evy measure is
considered. Thus Theorem \ref{t1} extends the latter results to state dependent Gaussian jumps
using a completely different technique. Extensive numerical tests showed that the first order
approximation gives extremely accurate results and the precision seems to be further improved by
considering higher order approximations.
\begin{corollary}\label{cor2}
Under the assumptions of Theorem \ref{t1}, we have the following estimate for the error on the
approximate prices:
\begin{equation}
 \left |v(t,x) - v^{(N)}(t,x)\right|\leq   g_{N}(T-t)  \int_{\mathbb{R}} |h(y)| \left(\bar{\G}(t,x;T,y)+\left\|  \partial_x \nu  \right\|_{\infty} \widetilde{\G}(t,x;T,y) \right) dy,
\end{equation}
for any $x\in \Rb$ and $t< T$.
\end{corollary}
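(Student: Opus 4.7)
The plan is to reduce the price error to the density error already controlled by Theorem \ref{t1}. First, I would write both the exact price and its $N$th order approximation as integrals against the fundamental solution and its approximation, respectively. From \eqref{eq:v.def1} we have
\begin{align}
v(t,x) = \int_\Rb h(y)\, p(t,x;T,y)\, dy,
\end{align}
and, since each $v_n$ arises from the Cauchy problems \eqref{eq:v.0.pide}--\eqref{eq:v.n.pide} with the same forcing structure, summing the associated representation gives
\begin{align}
v^{(N)}(t,x) = \sum_{n=0}^{N} v_n(t,x) = \int_\Rb h(y)\, p^{(N)}(t,x;T,y)\, dy,
\end{align}
where $p^{(N)}$ is the $N$th order approximation of the FK density (obtained by setting $h=\delta_y$ in the construction of $v^{(N)}$, as noted in the remark after Theorem \ref{thm:v}).

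Next, I would subtract the two representations, take absolute values, and bring the modulus inside the integral:
\begin{align}
\bigl|v(t,x) - v^{(N)}(t,x)\bigr|
    \;\leq\; \int_\Rb |h(y)|\, \bigl| p(t,x;T,y) - p^{(N)}(t,x;T,y)\bigr|\, dy.
\end{align}
At this point Theorem \ref{t1} applies pointwise in $y$: inserting the bound \eqref{eqbound10} and pulling the $y$-independent factor $g_N(T-t)$ outside the integral yields exactly the claimed estimate.

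The routine technical step is justifying the linearity $v^{(N)}(t,x) = \int h(y)\, p^{(N)}(t,x;T,y)\, dy$ and the interchange of integration with the finite sum over $n$; this is immediate under the integrability hypotheses of Theorem \ref{thm:v}, which are inherited here. The only real subtlety is ensuring that the right-hand side is finite for the payoffs of interest, i.e.\ that $h$ is integrable against both $\bar{\Gamma}(t,x;T,\cdot)$ and $\widetilde{\Gamma}(t,x;T,\cdot)$; by the explicit Gaussian-type representations in Remark \ref{remt1}, both kernels decay like Gaussian mixtures in $y$, so the estimate is meaningful for all payoffs with at most polynomial or mild exponential growth. Since the statement is formulated as an inequality, no further work is needed beyond noting that both sides are allowed to be $+\infty$ simultaneously, and the proof concludes.
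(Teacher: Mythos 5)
Your argument is correct and is exactly the intended route: the paper states the corollary without proof precisely because it follows immediately from writing $v$ and $v^{(N)}$ as integrals of $h$ against $p$ and $p^{(N)}$ (the latter by linearity of each $\vh_n$ in $\hh$), moving the absolute value inside, and applying the pointwise bound \eqref{eqbound10} of Theorem \ref{t1}. Your closing remarks on integrability of $h$ against the Gaussian-type kernels are a sensible, if optional, addition.
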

Some possible extensions of these asymptotic error bounds to general L\'evy measures are possible,
though they are certainly not straightforward. Indeed, the proof of Theorem \ref{t1} is based on
some pointwise uniform estimates for the fundamental solution of the constant coefficient
operator, i.e. the transition density of a compound Poisson process with Gaussian jumps. When
considering other L\'evy measures these estimates would be difficult to carry out, especially
in the case of jumps with infinite activity, but they might be obtained in some suitable normed
functional space. This might lead to error bounds for short maturities, which are
expressed in terms of a suitable norm, as opposed to uniform pointwise bounds.
\begin{remark}
\label{rmk:practice}
Since, in general, it is hard to derive the truncation error bound, the reader may wonder how to determine the number of terms to  include in the asymptotic expansion.  Though we provide a general expression for the $n$-th term, realistically, only the fourth order term can be computed.  That said,
in practice, three terms provide an approximation which is accurate enough for most applications (i.e., the resulting approximation error is smaller than the bid-ask spread typically quoted on the market).  Since, $v^{(n)}$ only requires a single Fourier integration, there is no numerical advantage for choosing smaller $n$.  As such, for financial applications we suggest using $n=3$ or $n=4$.
\end{remark}

\section{Examples}
\label{sec:examples}

In this section, in order to illustrate the versatility of our asymptotic expansion, we apply our
approximation technique to a variety of different L\'evy-type models.  We consider both finite and
infinite activity L\'evy-type measures and models with and without a diffusion component.
We study not only option prices, but also implied volatilities.
In each setting, if the exact or approximate option price has been computed by a method other than our own, we compare this to the option
price
obtained by our approximation.  For cases where the exact
or approximate option price is not analytically available, we use Monte Carlo methods to
verify the accuracy of our method.
\par
Note that, some of the examples considered below do not satisfy the conditions listed in Section
\ref{sec:model}.   In particular, we will consider coefficients $(a,\gam,\nu)$ that are not bounded.  Nevertheless, the formal results of Section \ref{sec:formal} work well in the examples considered.


\subsection{CEV-like L\'evy-type processes}
\label{sec:CEV} We consider a L\'evy-type process of the form \eqref{eq:dX} with CEV-like
volatility and jump-intensity.  Specifically, the $\log$-price dynamics are given by
\begin{align}
a(x)
    &=  \frac{1}{2} \del^2 e^{2(\beta-1)x}, &
\nu(x,dz)
    &=  e^{2(\beta-1)x} \Nc(dz), &
\gam(x)
    &=  0, &
\del
    &\geq   0, &
\beta
    &\in [0,1], \label{eq:CEV.like}
\end{align}
where $\Nc(dx)$ is a L\'evy measure.  When $\Nc \equiv 0$, this model reduces to the CEV model of
\citet{CoxCEV}.  Note that, with $\beta \in [0,1)$, the volatility and jump-intensity increase as
$x \to -\infty$, which is consistent with the leverage effect (i.e., a decrease in the
value of the underlying is often accompanied by an increase in volatility/jump intensity).  This
characterization will yield a negative skew in the induced implied volatility surface. As the
class of models described by \eqref{eq:CEV.like} is of the form \eqref{eq:proportional0} with
$f(x)=e^{2(\beta-1)x}$, this class naturally lends itself to the two-point Taylor series
approximation of Example \ref{ex:two-pt}.  Thus, for certain numerical examples in this Section,
we use basis functions $B_n$ given by \eqref{eq:B.two-pt}.  In this case we choose expansion
points $\xb_1$ and $\xb_2$ in a symmetric interval around $X_0$ and in \eqref{eq:M} we choose $M =
f(X_0) =e^{2(\beta-1)X_0}$.  For other numerical examples, we use the (usual) one-point Taylor
series expansion $B_n(x)=(x-\xb)^n$.  In this cases, we choose $\xb = X_0$.
\par
We will consider two different characterizations of $\Nc(dz)$:
\begin{align}
\text{Gaussian:}&&
\Nc(dz)
    &=  \lam \frac{1}{\sqrt{2\pi \eta^2}}\exp\( \frac{-(z-m)^2}{2 \eta^2} \) dz,
            \label{eq:eta.gaussian} \\
\text{Variance-Gamma:}&&
\Nc(dz)
    &=  \( \frac{e^{-\lam_{-} |z|}}{\kappa|z|} \Ib_{\{z<0\}} + \frac{e^{-\lam_{+} z}}{\kappa z} \Ib_{\{z>0\}} \) dz,
            \label{eq:eta.VG} \\
&&
\lam_{\pm}
    &=  \( \sqrt{\frac{\theta^2 \kappa^2}{4} + \frac{\rho^2 \kappa}{2}} \pm \frac{\theta \kappa}{2}\)^{-1}
\end{align}
Note that the Gaussian measure is an example of a finite-activity L\'evy measure (i.e., $\Nc(\Rb)<\infty$), whereas the Variance-Gamma measure, due to \citet{madan1998variance}, is an infinite-activity L\'evy measure (i.e., $\Nc(\Rb) = \infty$).  As far as the authors of this paper are aware, there is no closed-form expression for option prices (or the transition density) in the setting of \eqref{eq:CEV.like}, regardless of the choice of $\Nc(dz)$.  As such, we will compare our pricing approximation to prices of options computed via standard Monte Carlo methods.
\begin{remark}
\footnote{We would like to thank an anonymous referee for bringing the issue of boundary conditions to our attention.}
\label{rem:boundarycond}
Note, the CEV model typically includes an absorbing boundary condition at $S=0$.  A more rigorous way to deal with degenerate dynamics, as in the CEV model, would be to approximate the solution of the Cauchy problem related to the process $S_t$ (as apposed to $X_t = \log S_t$).  One would then equip the Cauchy problem with suitable Dirichlet conditions on the boundary $s=0$, and work directly in the variable $s \in \R_{+}$ as opposed to the log-price on $x \in \R$.  Indeed, this is the approach followed by \citet*{hagan-woodward} who approximate the true density $p$ by a Gaussian density $p_{0}$ through a heat kernel expansion: note that the supports of $p$ and $p_{0}$ are $\R_{+}$ and $\R$ respectively. In order to take into account of the boundary behavior of the true density $p$, an improved approximation could be achieved by using the Green function of the heat operator for $\R_{+}$ instead of the Gaussian kernel: this will be object of further research in a forthcoming paper.
\par
We would also like to remark explicitly that our methodology is very general and works with different choices for the leading operator of the expansion, such as the constant-coefficient PIDEs we consider in the case of jumps.  Nevertheless, in the present paper, when purely diffusive models are considered, {\it we always take the heat operator as the leading term of our expansion}. The main reasons are that (i) the heat kernel is convenient for its computational simplicity and (ii) the heat kernel allows for the possibility of passing directly from a Black-Scholes-type price expansion to an implied vol expansion.
%
\end{remark}
\subsubsection{Gaussian L\'evy Measure}
\label{sec:Merton.CEV}
In our first numerical experiment, we consider the case of Gaussian jumps.
That is, $\Nc(dz)$ is given by \eqref{eq:eta.gaussian}.  We fix the following parameters
\begin{align}
\del
    &= 0.20, &
\beta
    &= 0.25, &
\lam
    &= 0.3, &
m
    &= -0.1, &
\eta
    &=  0.4, &
S_0 = e^x
    &=  1. \label{eq:parameters}
\end{align}
Using Corollary \ref{thm:u}, we compute the approximate prices $u^{(0)}(t,x;K)$ and
$u^{(3)}(t,x;K)$ of a series of European puts over a range of strikes $K$ and with times to
maturity $t=\{0.25,1.00,3.00,5.00\}$ (we add the parameter $K$ to the arguments of $u^{(n)}$ to
emphasize the dependence of $u^{(n)}$ on the strike price $K$). To compute $u^{(i)}(t,x;K)$, $i=\{0,3\}$ we use the we the usual one-point Taylor series expansion (Example \ref{ex1}).
We also compute the price $u(t,x;K)$ of each put by Monte
Carlo simulation. For the Monte Carlo simulation, we use a standard Euler scheme with a time-step
of $10^{-3}$ years, and we simulate $10^6$ sample paths. We denote by $u^{(MC)}(t,x;K)$ the price
of a put obtained by Monte Carlo simulation.
As prices are often quoted in implied volatilities, we convert prices to implied volatilities by inverting the Black-Scholes formula numerically.  That is, for a given put price $u(t,x;K)$, we find $\sig(t,K)$ such that
\begin{align}
u(t,x;K)
    &=  u^{\text{\rm BS}}(t,x;K,\sig(t,K)), \label{eq:impvol.solve}
\end{align}
where $u^{\text{\rm BS}}(t,x;K,\sig)$ is the Black-Scholes price of the put as computed assuming a
Black-Scholes volatility of $\sig$.  For convenience, we introduce the notation
\begin{align}
\text{IV}[u(t,x;K)]:= \sig(t,K)
\end{align}
to indicate the implied volatility induced by option price $u(t,x;K)$.
The results of our numerical experiments are plotted in Figure
\ref{fig:IV-Gauss-1}.  We observe that $\text{IV}[u^{(3)}(t,x;K)]$ agrees almost exactly with $\text{IV}[u^{(MC)}(t,x;K)]$.   The computed prices $u^{(3)}(t,x;K)$ and their induced implied volatilities $\text{IV}[u^{(3)}(t,x;K)]$, as well as 95\% confidence intervals resulting from the Monte Carlo simulations can be found in Table \ref{tab:IV-Gauss-1}.

\subsubsection*{Comparing one-point Taylor and Hermite expansions}
As choosing different basis functions results in distinct option-pricing approximations, one might wonder: which choice of basis functions provides the most accurate approximation of option prices and implied volatilities?  We investigate this question in Figure \ref{fig:OnePt-Hermite}.  In the left column, using the parameters in \eqref{eq:parameters}, we plot $\text{IV}[u^{(n)}(t,x;K)]$, $t=0.5$, $n=\{0,1,2,3,4\}$ where $u^{(n)}(t,x;K)$ is computed using both the one-point Taylor series basis functions (Example \ref{ex1}) and the Hermite polynomial basis functions (Example \ref{ex:L2}).  We also plot $\text{IV}[u^{(MC)}(t,x;K)]$, the implied volatility obtained by Monte Carlo simulation.  For comparison, in the right column, we plot the function $f$ as well as $f_{\text{Taylor}}^{(n)}$ and $f_{\text{Hermite}}^{(n)}$ where
\begin{align}
f(x)
    &=e^{2(\beta-1)x} , &
f_{\text{Taylor}}^{(n)}(x)
    &:= \sum_{m=0}^n \frac{1}{m!} \d^m f(\xb) (x-\xb)^m , &
f_{\text{Hermite}}^{(n)}(x)
    &:= \sum_{m=0}^n \frac{1}{m!} \< H_m , f \> H_m(x) . \label{eq:fs}
\end{align}
From Figure \ref{fig:OnePt-Hermite}, we observe that, for every $n$, the Taylor series expansion $f_{\text{Taylor}}^{(n)}$ provides a better approximation of the function $f$ (at least locally) than does the Hermite polynomial expansion $f_{\text{Hermite}}^{(n)}$.  In turn, the implied volatilities resulting from the Taylor series basis functions $\text{IV}[u^{(n)}(t,x;K)]$ more accurately approximate $\text{IV}[u^{(MC)}(t,x;K)]$ than do the implied volatilities resulting from the Hermite basis functions.  The implied volatilities resulting from the two-point Taylor series price approximation (not shown in the Figure for clarity), are nearly indistinguishable implied volatilities induced by the (usual) one-point Taylor series price approximation.

\subsubsection*{Computational speed, accuracy and robustness}
In order for a method of obtaining approximate option prices to be useful to practitioners, the method must be fast, accurate and work over a wide range of model parameters.  In order to test the speed, accuracy and robustness of our method, we select model parameters at random from uniform distributions within the following ranges
\begin{align}
\del
    &\in [0.0,0.6] , &
\beta
    &\in [0.0,1.0] , &
\lambda
    &\in [0.0,1.0] , &
m
    &\in [-1.0,0.0] , &
\eta
    &\in [0.0,1.0] . &
\end{align}
Using the obtained parameters, we then compute approximate option prices $u^{(3)}$ and record computation times over a fixed range of strikes using our third order one-point Taylor expansion (Example \ref{ex1}).  As the exact price of a call option is not available, we also compute option prices by Monte Carlo simulation.  The results are displayed in Tables \ref{tab:IV-Gauss-2} and \ref{tab:IV-Gauss-3}.  The tables show that our third order price approximation $u^{(3)}$ consistently falls within the 95\% confidence interval obtained from the Monte Carlo simulation.  Moreover, using a 2.4 GHz laptop computer, an approximate call price $u^{(3)}$ can be computed in only $\approx 0.05$ seconds.  This is only four to five times larger than the amount of time it takes to compute a similar option price using standard Fourier methods in an exponential L\'evy setting.

\subsubsection{Variance Gamma L\'evy Measure}
\label{sec:VG.CEV}
In our second numerical experiment, we consider the case of Variance Gamma jumps.  That is, $\Nc(dz)$ given by \eqref{eq:eta.VG}.  We fix the following parameters:
\begin{align}
\del
    &= 0.0, &
\beta
    &= 0.25, &
\theta
    &= -0.3, &
\rho
    &= 0.3, &
\kappa
    &=  0.15, &
S_0 = e^x
    &=  1.
\end{align}
Note that, by letting $\del=0$, we have set the diffusion component of $X$ to zero: $a(x)=0$.
Thus, $X$ is a pure-jump L\'evy-type process.  Using Corollary \ref{thm:u}, we compute the
approximate prices $u^{(0)}(t,x;K)$ and $u^{(2)}(t,x;K)$ of a series of European puts over a range
of strikes and with maturities $t \in \{0.5, 1.0\}$.  To compute $u^{(i)}$, $i \in \{0,2 \}$, we use the two-point Taylor series expansion (Example \ref{ex:two-pt}).  We also compute the put prices by Monte Carlo simulation.  For the Monte Carlo simulation, we use a time-step of $ 10^{-3}$ years and we simulate $10^6$ sample paths.  At each time-step, we update $X$ using the following algorithm
\begin{align}
X_{t+\Delta t}
    &=  X_t + b(X_t) \Delta t + \gam^+(X_t) - \gam^-(X_t), &
I(x)
    &=  e^{2(\beta-1)x}, \\
b(x)
    &=  - \frac{I(x)}{\kappa} \( \log \( \frac{\lam_-}{1+\lam_-}\) + \log\( \frac{\lam_+}{\lam_+-1} \) \), &
\gam^\pm(x)
    &\sim \Gam( I(x) \cdot \Delta t/ \kappa, 1/\lam_\pm),
\end{align}
where $\Gam(a,b)$ is a Gamma-distributed random variable with shape parameter $a$ and scale parameter $b$.  Note that this is equivalent to considering a VG-type process with state-dependent parameters
\begin{align}
\kappa'(x)
    &:= \kappa/I(x), &
\theta'(x)
    &:= \theta I(x), &
\rho'(x)
    &:= \rho \sqrt{I(x)}.
\end{align}
These state-dependent parameters result in state-\emph{independent} $\lam_\pm$ (i.e., $\lam_\pm$ remain constant).
Once again, since implied volatilities rather than prices are the quantity of primary interest, we convert prices to implied volatilities by inverting the Black-Scholes formula numerically.  The results are plotted in Figure \ref{fig:IV-VG-MonteCarlo}.  We observe that $\text{IV}[u^{(2)}(t,x;K)]$ agrees almost exactly with $\text{IV}[u^{(MC)}(t,x;K)]$.  Values for $u^{(2)}(t,x;K)$, the associated implied volatilities $\text{IV}[u^{(2)}(t,x;K)]$ and the 95\% confidence intervals resulting from the Monte Carlo simulation can be found in table \ref{tab:IV-VG-MonteCarlo1}.

\section{Conclusion}
\label{sec:conclusion}

In this paper, we consider an asset whose risk-neutral dynamics are described by an exponential
L\'evy-type martingale subject to default. This class includes nearly all non-negative Markov
processes.  In this very general setting, we provide a family of approximations -- one for each
choice of the basis functions (i.e. Taylor, two-point Taylor, $L^2$ basis, etc.) -- for (i) the
transition density of the underlying (ii) European-style option prices and their sensitivities and
(iii) defaultable bond prices and their credit spreads.  For the transition densities, {and thus
for option and bond prices as well}, we establish the accuracy of our asymptotic expansion.

\subsection*{Thanks}
The authors would like to extend their thanks to an anonymous referee, whose comments and suggestions helped to improve this paper.

%
%

\appendix


\section{Proof of Theorem \ref{thm:v}}
\label{sec:un}

By hypothesis $v_n \in L^1(\Rb,dx)$, and thus, by standard Fourier transform properties we the
following relation holds:
\begin{equation}\label{Fourierprop}
 \F(\Ac_k v_{n}(t,\cdot))(\xi)=\phi_k(t,\x) \vh_{n}(t,\x),\quad n,k\geq 0.
\end{equation}
We now Fourier transform equation \eqref{eq:v.n.pide}.  At the left-hand side we have
\begin{equation}
 \F(\(\d_t + \Ac_0 \) v_n(t,\cdot))(\xi)=\left(\d_t + \phi_0(t,\x)\right) \vh_n(t,\x).
\end{equation}
Next, for the right-hand side of \eqref{eq:v.n.pide} we get
\begin{align}
 -\sum_{k=1}^n \int_\Rb dx \left(\frac{e^{-i \x  x}}{\sqrt{2\pi}} B_{k}(x)\right) \Ac_k v_{n-k}(t,x)
    &=  - \sum_{k=1}^n \int_\Rb dx \left(B_{k}(i\p_{\x})\frac{e^{-i \x  x}}{\sqrt{2\pi}}\right) \Ac_k v_{n-k}(t,x) \\
    &=  - \sum_{k=1}^n B_{k}(i\p_{\x})\F(\Ac_k  v_{n-k}(t,\cdot))(\xi)
    \intertext{(by \eqref{Fourierprop})}
        &=  -\sum_{k=1}^n B_{k}(i\p_{\x}) \left(\phi_k(t,\x) \vh_{n-k}(t,\x)\right).
\end{align}
Thus, we have the following ODEs (in $t$) for $\vh_n(t,\x)$
\begin{align}
 \left( \d_t + \phi_0(t,\x)\right) \vh_0(t,\x)
    &=  0, &
\vh_0(T,\x )
    &=  \hh(\x), \label{eq:v.hat.0.ode} \\
 \left(\d_t + \phi_0(t,\x)\right) \vh_n(t,\x)
    &=  - \sum_{k=1}^n B_{k}(i\p_{\x}) \left(\phi_k(t,\x) \vh_{n-k}(t,\x)\right)
    &\vh_n(T,\x ) &=  0. \label{eq:v.hat.n.ode}
\end{align}
One can easily verify (e.g., by substitution) that the solutions of \eqref{eq:v.hat.0.ode} and \eqref{eq:v.hat.n.ode} are given by \eqref{eq:v.hat.0} and \eqref{eq:v.hat.n} respectively.

\endproof


\section{Mathematica code}
\label{sec:mathematica}

The following Mathematica code can be used to generate the $\uh_n(t,\x)$ automatically for Taylor
series basis functions: $B_n(x)=(x-x_0)^n$.  We have
\begin{align}
\texttt{B}[\texttt{n$\_$},\texttt{x$\_$},\texttt{x0$\_$}]
    &=(\texttt{x}-\texttt{x0}){}^{\wedge}\texttt{n};\\
\texttt{Bop}[\texttt{n$\_$},\xi \_,\texttt{x0$\_$},\texttt{ff$\_$}]
    &\texttt{:=}\texttt{Module}\Big[\{\texttt{mat},\dim ,\texttt{x}\},\\
    &\texttt{mat}
    = \texttt{CoefficientList}[\texttt{B}[n,x,\texttt{x0}],\texttt{x}];\\
    &\dim
    = \texttt{Dimensions}[\texttt{mat}];\\
    &\sum _{\texttt{m}=1}^{\dim [[1]]} \texttt{mat}[[\texttt{m}]](\texttt{i}){}^{\wedge}(\texttt{m}-1)\texttt{D}[\texttt{ff},\{\xi ,\texttt{m}-1\}]\\
\Big]; \\ \texttt{u}[\texttt{n$\_$},\texttt{t$\_$},\xi \_,\texttt{x0$\_$},\texttt{k$\_$}]
    &\texttt{:=}\texttt{Exp}[\texttt{t} \texttt{$\phi$}[0,\xi ,\texttt{x0}]] \sum _{\texttt{m}=1}^\texttt{n} \\ &\qquad
        \int_0^\texttt{t}
        \texttt{Exp}[-\texttt{s} \texttt{$\phi $}[0,\xi ,\texttt{x0}]](\texttt{Bop}[\texttt{m},\xi ,\texttt{x0},\texttt{$\phi$}[\texttt{m},\xi ,\texttt{x0}]\texttt{u}[\texttt{n}-\texttt{m},\texttt{s},\xi ,\texttt{x0},\texttt{k}]])\texttt{ds};\\
\texttt{u}[0,\texttt{t$\_$},\xi \_\texttt{x0$\_$},\texttt{k$\_$}]
    &=\texttt{Exp}[\texttt{t} \texttt{$\phi $}[0,\xi ,\texttt{x0}]]\texttt{h}[\xi ,\texttt{k}] ;
\end{align}
The function $\uh_n(t,\x)$ is now computed explicitly by typing
$\texttt{u}[\texttt{n$\_$},\texttt{t$\_$},\xi \_,\texttt{x0$\_$},\texttt{k$\_$}]$ and pressing
Shift+Enter.  Note that the function $\uh_n(t,\x)$ can depend on a parameter $k$ (e.g.,
$\log$-strike) through the Fourier transform of the payoff function $\hh(\x,k)$. To compute
$\uh_n(t,\x)$ using other basis functions, one simply has to replace the first line in the code.
For example, for Hermite polynomial basis functions, one re-writes the top line as
\begin{align}
\texttt{B}[\texttt{n$\_$},\texttt{x$\_$},\texttt{x0$\_$}]
    &=\frac{1}{\sqrt{(\texttt{2n})\texttt{!!}\sqrt{\pi }}}\texttt{HermiteH}[\texttt{n},\texttt{x}-\texttt{x0}];
\end{align}
where $\texttt{HermiteH}[\texttt{n},\texttt{x}]$ is the Mathematica command for the $n$-th Hermite
polynomial $H_n(x)$ (note that Mathematica does not normalize the Hermite polynomials as we do in
equation \eqref{eq:Bn.Hermite}).


\section{Sketch of the Proof of Theorem \ref{t1}}\label{sec:errorproof}

\proof

For sake of simplicity we only provide a sketch of the proof for the case $\delta(t,x)\equiv\delta$, $\mu(t,x)\equiv\mu$ and $N=1$. For the complete and detailed proof we refer to the companion paper \cite{lpp-1.5}.

The main idea of the proof is to use our expansion as a \emph{parametrix}.  That is, our expansion will be the starting point of the classical iterative method introduced by \cite{Levi} to construct the fundamental solution $p(t,x;T,y)$.
%
Specifically, as in \cite{RigaPagliaraniPascucci}, we take as a parametrix our $N$-th order
approximation $p^{(N)}(t,x;T,y)$ with basis functions $B_n=(x-\xb)^n$ and with $\overline{x}=y$. 
By analogy with the classical approach (see, for instance, \cite{Friedman} and
\cite{DiFrancescoPascucci2}, \cite{Pascucci2011} for the purely diffusive case, or
\cite{GarroniMenaldi} for the
integro-differential case), we have 
\begin{equation}\label{eqbound2}
 p(t,x;T,y)=p^{(1)}(t,x;T,y)+\int_{t}^{T}\int_{\R}p^{(0)}(t,x;s,\x)\Phi(s,\x;T,y)d\x ds ,
\end{equation}
where $\Phi$ is 
determined by imposing the condition 
  $$0=L p(t,x;T,y)=L p^{(1)}(t,x;T,y)+\int_{t}^{T}\int_{\R}L p^{(0)}(t,x;s,\x)\Phi(s,\x;T,y)d\x ds
  -\Phi(t,x;T,y).$$
Equivalently, we have
  $$\Phi(t,x;T,y)=L p^{(1)}(t,x;T,y)+\int_{t}^{T}\int_{\R}L p^{(0)}(t,x;s,\x)\Phi(s,\x;T,y)d\x ds , $$
and therefore by iteration
  \begin{equation}\label{eqbound1}
  \Phi(t,x;T,y)=\sum_{n=0}^{\infty}Z_{n}(t,x;T,y),
  \end{equation}
where
  \begin{align}\label{eqbound6}
  Z_{0}(t,x;T,y) & :=L p^{(1)}(t,x;T,y), \\ \label{eqbound9}
  Z_{n+1}(t,x;T,y)& :=\int_{t}^{T}\int_{\R}L p^{(0)}(t,x;s,\x)Z_{n}(s,\x;T,y)d\x ds.
  \end{align}
The proof of Theorem \ref{t1}, then, is based on some pointwise bounds
for each term $Z_{n}$ in \eqref{eqbound1}. 
These bounds, summarized in the next two propositions, can be combined with formula \eqref{eqbound2} to obtain the estimate for $\left|p(t,x;T,y)- p^{(1)}(t,x;T,y)\right|$.

\begin{proposition}\label{lemestim1}
There exists a positive constant $C$, only dependent on
$\t,m$ and $M$, such that
  \begin{equation}\label{e20}
  p^{(0)}(t,x;T,y)\leq C\,  \bar{\G}(t,x;T,y),\quad 0\leq t < T \leq \tau.
  \end{equation}
  for any $x,y\in\R$ and $t,T\in\R$ with $0\leq t<T\le \t$.
\end{proposition}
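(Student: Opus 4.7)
I plan to exploit the very explicit structure of $p^{(0)}$. Since the Taylor basis is centered at $\xb=y$, the leading operator $\Ac_{0}$ has coefficients that are independent of $x$ (they are the original coefficients frozen at $x=y$). Hence $\Ac_{0}$ generates a time-inhomogeneous process consisting of a drift, a Gaussian diffusion, and a compound Poisson jump component with Gaussian jump distribution. Consequently $p^{(0)}(t,x;T,y)$ admits an explicit Poisson-weighted convolution series representation, obtained either by conditioning on the number $n$ of jumps in $[t,T]$ or, equivalently, by inverting the Fourier formula \eqref{eq:u0.hat} with $h=\del_{y}$ and expanding $e^{\int_t^T \phi_0(s,\x)ds}$ in powers of the compound-Poisson part of the symbol. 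The plan is to compare this series term-by-term with the analogous series for $\bar\G$ supplied by Remark \ref{remt1} (with $k=0$).

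\textbf{Steps 1 and 2 (series representation and Gaussian comparison).} Writing
\begin{align}
 p^{(0)}(t,x;T,y)=e^{-\int_{t}^{T}(\g(s,y)+\lam(s,y))\,ds}\sum_{n=0}^{\infty}\frac{\Lam(t,T)^{n}}{n!}\,G_{n}(x,y;t,T),\qquad \Lam(t,T):=\int_{t}^{T}\lam(s,y)ds,
\end{align}
where $G_{n}$ is a Gaussian in $x-y$ whose mean is the martingale-condition drift $d_{0}(\cdot,y)$ plus $n$ times the jump mean $\m(\cdot,y)$, and whose variance is $2\int_{t}^{T}a(s,y)ds$ plus $n\,\delta^{2}(\cdot,y)$ contributions. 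Using $m\le a\le M$ and $0\le\lam,|\m|,\delta^{2}\le M$ one gets $\mathrm{Var}(G_{n})\le 2M(T-t+n)$; choosing $\bar M\ge 2M$ therefore yields $\mathrm{Var}(G_{n})\le \bar M(T-t+n)=\mathrm{Var}(\widetilde G_{n})$, where $\widetilde G_{n}$ is the $n$-th Gaussian in the series for $\bar\G$. Then the log-ratio $\log(G_{n}/\widetilde G_{n})$ is a quadratic in $x-y$ with nonpositive leading coefficient, hence bounded above by a constant $C_{1}=C_{1}(\t,m,M)$ independent of $n$ and of $(x,y)$; this gives the pointwise bound $G_{n}\le C_{1}\widetilde G_{n}$.

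\textbf{Step 3 (Poisson weights and summation).} The Poisson weights satisfy $\Lam(t,T)^{n}\le(\bar M(T-t))^{n}$ because $\lam(\cdot,y)\le M\le\bar M$, and $e^{-\int_{t}^{T}(\g+\lam)ds}\le e^{\bar M\t}\,e^{-\bar M(T-t)}$ whenever $T-t\le\t$. Plugging these bounds together with the Gaussian comparison into the series above, and identifying the resulting sum with the explicit expansion of $\bar\G$ from Remark \ref{remt1}, gives
\begin{align}
 p^{(0)}(t,x;T,y)\le C_{1}e^{\bar M\t}\,\bar\G(t,x;T,y),
\end{align}
which is \eqref{e20} with $C=C_{1}e^{\bar M\t}$ depending only on $\t,m,M$.

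\textbf{Main obstacle.} The delicate point is the uniformity in Step 2: one needs $G_{n}\le C_{1}\widetilde G_{n}$ uniformly in both $n\in\N$ and $x-y\in\R$. The mean of $G_{n}$ picks up the martingale-condition drift $d_{0}(s,y)=\g(s,y)-a(s,y)-\int(e^{z}-1-z)\n(s,y,dz)$ together with $n\,\m(s,y)$, and the comparison with the $\bar M n$ shift built into $\widetilde G_{n}$ requires a careful completion of the square to check that the quadratic stays bounded above as $n\to\infty$. In the full setting of Theorem \ref{t1}, where $\lam,\m,\delta$ depend on $(t,x)$, the clean ``conditioning on jumps'' decomposition is only approximate, so one must instead derive the same term-wise bounds directly from the Fourier representation; this is the technical core of the companion paper \cite{lpp-1.5}.
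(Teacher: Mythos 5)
There is a genuine gap, and it sits exactly at the step you call Step 2. Your $n$-th Gaussian $G_n$ is centered, in the variable $w=x-y$, at $-n\mu(\cdot,y)+O(T-t)$ with $|\mu|\le M$, whereas the $n$-th Gaussian $\widetilde G_n$ in the series \eqref{andconv} for $\bar\Gamma$ (with $k=0$) is centered at $-\bar M n$. The centers therefore separate at linear rate at least $(\bar M-M)n$, while the variance surplus $\bar M(T-t+n)-\bigl(2\int_t^T a(s,y)ds+n\delta^2\bigr)$ also grows only linearly in $n$. For two Gaussian densities with means $a_n,b_n$ and variances $s_n^2<v_n^2$ one has
\begin{equation}
\sup_{w\in\R}\frac{\mathscr{N}_{a_n,s_n^2}(w)}{\mathscr{N}_{b_n,v_n^2}(w)}
=\frac{v_n}{s_n}\exp\!\left(\frac{(a_n-b_n)^2}{2\,(v_n^2-s_n^2)}\right),
\end{equation}
and here the exponent is of order $\frac{(\bar M-M)^2}{2\bar M}\,n\to\infty$; the vertex of your downward parabola is \emph{not} bounded uniformly in $n$, so the constant $C_1(\t,m,M)$ claimed in Step 2 does not exist. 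A concrete check: take $\mu\equiv 0$, $x=y$; then $G_n(0)\asymp\bigl(2\pi(2\int a+n\delta^2)\bigr)^{-1/2}$, while $\widetilde G_n(0)$ carries the factor $\exp\bigl(-\bar M n^2/(2(T-t+n))\bigr)\approx e^{-\bar M n/2}$, so the ratio grows exponentially in $n$. Nor can this loss be hidden in Step 3: the slack in the Poisson weights is only $\bigl(\bar M(T-t)/\Lambda(t,T)\bigr)^n\le(\bar M/M)^n$, and $e^{cn}\le(\bar M/M)^n$ with $c\approx(\bar M-M)^2/(2\bar M)$ fails for general admissible $m,M$ (and enlarging $\bar M$ makes $c$ grow linearly while the slack grows only logarithmically). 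So matching the $n$-th Poisson term of $p^{(0)}$ with the $n$-th term of $\bar\Gamma$ is the wrong pairing, not merely a delicate point to be checked.

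For comparison with the paper: the paper does not actually prove Proposition \ref{lemestim1} here — the appendix states it and defers to the companion paper \cite{lpp-1.5}, indicating only that the proof rests on several \emph{global} pointwise estimates for the fundamental solution of the constant-coefficient operator \eqref{eq:integro_diff_constant}, used together with the semigroup identity \eqref{eqbound20}; i.e. the comparison is organized around the kernels $\mathscr{C}^k\bar\Gamma$ and the series as a whole, not around an index-by-index Gaussian domination. Your Steps 1 and 3 are sound and a natural starting point (the Poisson–Gaussian representation of $p^{(0)}$, the bounds $\Lambda(t,T)\le M(T-t)$ and $e^{-\int_t^T(\gam+\lam)ds}\le e^{\bar M\t}e^{-\bar M(T-t)}$), but to close the argument you need an estimate at the level of whole sums — for instance dominating $G_n$ by constants times suitable combinations of the terms of $\bar\Gamma$ with re-paired indices that account for the mean shift $\bar M$ built into $\mathscr{N}_{\bar M,\bar M}$, or dominating the single-jump law $\mathscr{N}_{\mu,\delta^2}\le c\,\mathscr{N}_{\bar M,\bar M}$ and then tracking the resulting $c^n$ against the weights, or bypassing the series via Fourier-side or comparison-principle arguments — and it is not obvious which of these patches succeeds with a constant depending only on $\t,m,M$; that is precisely the technical content deferred to \cite{lpp-1.5}. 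The further point you raise about $(t,x)$-dependent $\mu,\delta$ (conditional laws being Gaussian mixtures) is real but secondary, since the paper's sketch freezes $\mu,\delta$; the primary obstruction is the failure of the uniform-in-$n$ termwise bound.
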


\begin{proposition}\label{properr3}
There exists a positive constant $C$, only dependent on
$\t,m,M$ and $(\|\lambda_{i}\|_{\infty}$,$\|\gamma_{i}\|_{\infty},\|a_{i}\|_{\infty})_{i=1,2}$, such that
\begin{equation}\label{eqbound7}
\left|Z_n(t,x;T,y) \right| \leq \frac{C^{n+1}
(T-t)^{\frac{n}{2}}}{\sqrt{n!}}\left(1+\|\lambda_{1}\|_{\infty}\mathscr{C}^{n+1}\right)\,
\bar{\G}(t,x;T,y),
\end{equation}
for any $n\ge 0$, $x,y\in\R$ and $t,T\in\R$ with $0\leq t<T\le \t$.
\end{proposition}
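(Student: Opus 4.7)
The plan is to prove Proposition \ref{properr3} by strong induction on $n$, combining parametrix-type pointwise estimates for the Gaussian-type kernel $\bar\G$ with a careful bookkeeping of the non-local jump contributions through the convolution operator $\mathscr{C}$.

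For the base case $n=0$, I would exploit that $p^{(1)}=p_0+p_1$ is constructed precisely to cancel the zeroth- and first-order terms in the Taylor expansion of $\Ac$ about $\xb=y$. Writing $\Ac=\sum_{k\geq 0}(x-y)^k \Ac_k$ and using the defining equations \eqref{eq:v.0.pide}--\eqref{eq:v.n.pide}, the zeroth- and first-order pieces cancel and one is left with
\begin{align*}
Z_0 = L p^{(1)} = (x-y)\Ac_1 p_1 + \sum_{k\geq 2}(x-y)^k \Ac_k(p_0+p_1),
\end{align*}
so every surviving term carries at least a factor $(x-y)^2$ coming from the Taylor remainder. Combined with the classical pointwise inequality $|(x-y)^k \p_{x}^j \bar\G(t,x;T,y)|\leq C(T-t)^{(k-j)/2}\bar\G(t,x;T,y)$ for the diffusive part, this gives overall smallness of order $(T-t)^{1/2}$. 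The integral part of each $\Ac_k$ acts as a convolution against the Gaussian jump density, which one bounds by $C\|\lambda_1\|_\infty \mathscr{C}\bar\G$, producing the $\|\lambda_1\|_\infty\mathscr{C}$ contribution at level $n=0$.

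For the inductive step, an analogous cancellation applied to $p^{(0)}=p_0$ yields the auxiliary estimate
\begin{align*}
|L p^{(0)}(t,x;s,\xi)| \leq C (s-t)^{-1/2}\bigl(1 + \|\lambda_1\|_\infty \mathscr{C}\bigr)\bar\G(t,x;s,\xi),
\end{align*}
where the $(s-t)^{-1/2}$ singularity comes from the leading term $(x-y)\Ac_1 p_0$ (one $(x-y)$-factor against two $x$-derivatives of a Gaussian). Plugging this and the inductive hypothesis into \eqref{eqbound9}, I would separate the time and space integrations. The time integral $\int_t^T (s-t)^{-1/2}(T-s)^{n/2}ds = B(\tfrac12,\tfrac{n}{2}+1)(T-t)^{(n+1)/2}$ produces exactly the factor $(T-t)^{(n+1)/2}/\sqrt{(n+1)!}$ after absorbing the Beta-function constants via Stirling. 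The space integral is controlled by a Chapman--Kolmogorov-type bound $\int_\R \bar\G(t,x;s,\xi)\bar\G(s,\xi;T,y)d\xi\leq C\,\bar\G(t,x;T,y)$, together with a compatibility identity of the form $(\mathscr{C}^j \bar\G)\ast (\mathscr{C}^k \bar\G)\lesssim \mathscr{C}^{j+k}\bar\G$, which is the key structural step propagating the $\mathscr{C}$-powers through the iteration.

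The main obstacle will be this last compatibility estimate for iterated convolutions. Fortunately Remark \ref{remt1} represents $\mathscr{C}^k \bar\G$ explicitly as a doubly-indexed series of translated Gaussians, so the required inequality reduces to uniform control of this series under Gaussian convolution in $\xi$. Once in place, the product $(1+\|\lambda_1\|_\infty \mathscr{C})(1+\|\lambda_1\|_\infty \mathscr{C}^{n+1})$ generated at each step is dominated, up to a constant, by $(1+\|\lambda_1\|_\infty \mathscr{C}^{n+2})$, closing the induction. The remaining technicalities, in particular the higher-derivative bounds on $p^{(1)}$ used in the base case and the handling of the full $x$-dependence of $\mu$ and $\delta$, are the ones deferred to \cite{lpp-1.5}.
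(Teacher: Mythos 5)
Your overall strategy --- induction on $n$ through the parametrix iteration \eqref{eqbound9}, combining pointwise Gaussian-type bounds for $p^{(0)}$, $p^{(1)}$ and $L p^{(0)}$ with a Chapman--Kolmogorov step in space and a Beta-function integral in time to generate the factor $(T-t)^{n/2}/\sqrt{n!}$ --- is exactly the route the paper indicates; the paper itself does not prove the proposition but defers the details to \cite{lpp-1.5}, citing global pointwise estimates for $\bar{\G}$ and the semigroup property \eqref{eqbound20}. Note, however, that what you single out as the ``main obstacle'' (the compatibility of iterated convolutions) is not an obstacle at all: since $\mathscr{C}$ is a spatial convolution and $\bar{\G}$ is the spatially homogeneous fundamental solution of a constant-coefficient operator, $\mathscr{C}$ commutes with the associated semigroup and \eqref{eqbound20} holds as an exact identity, which the paper states and uses directly; no series manipulation from Remark \ref{remt1} is needed for that step.

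The genuine gaps are elsewhere. First, in the base case the claim that ``every surviving term carries at least a factor $(x-y)^2$'' is incorrect: the term $(x-y)\Ac_1 p_1$ carries only one such factor, and, more importantly, no positive power of $(T-t)$ can be extracted from the non-local parts of the $\Ac_k$ --- for $x\neq y$ the jump contributions produce $\mathscr{C}\bar{\G}$-type terms that do not vanish as $T-t\to 0$ (this is precisely why \eqref{eqbound7} has no positive power of $T-t$ at $n=0$ and why $g_N(s)=\Oc(s)$ in Theorem \ref{t1}); so your asserted $(T-t)^{1/2}$ smallness of $Z_0$ is false, even though the weaker bound actually required at $n=0$ remains plausible. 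Second, writing $\Ac=\sum_{k\ge 0}(x-y)^k\Ac_k$ as an infinite series presupposes analytic coefficients, whereas the theorem only assumes bounded, Lipschitz coefficients and first derivatives; one must work with the first-order Taylor expansion plus remainder, and the real technical content is then the absorption of the unbounded polynomial factors $(x-y)^k$ (and $(x-\xi)$ inside $L p^{(0)}$) against $\bar{\G}$ and against the shifted Gaussians composing $\mathscr{C}^{j}\bar{\G}$ --- these are the ``global pointwise estimates'' the paper defers to \cite{lpp-1.5}, and your outline takes them for granted. Third, closing the induction is subtler than ``dominated up to a constant'': after applying \eqref{eqbound20}, the product $(1+\|\lambda_{1}\|_{\infty}\mathscr{C})(1+\|\lambda_{1}\|_{\infty}\mathscr{C}^{n+1})$ leaves the intermediate terms $\mathscr{C}\bar{\G}$ and $\mathscr{C}^{n+1}\bar{\G}$, and the pointwise comparison constant between $\mathscr{C}^{k}\bar{\G}$ and $\mathscr{C}^{n+2}\bar{\G}$ is not uniform in the gap (it grows geometrically in $n$, as one sees from the shifted means $\bar{M}(j+k)$ in \eqref{andconv}); the induction closes only because this geometric growth can be absorbed into the factor $C^{n+1}$ appearing in \eqref{eqbound7}, a bookkeeping point your proposal does not address.
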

The proofs of Proposition \ref{lemestim1} and Proposition \ref{properr3} are rather technical and are based on several global pointwise estimates for the fundamental solution of a constant coefficient integro-differential  operator of the form \eqref{eq:integro_diff_constant}, along with the semigroup property
\begin{equation}\label{eqbound20}
 \int_{\R} \mathscr{C}^{k}\bar{\G}(t,x;s,\x)\,
 \mathscr{C}^{N}\bar{\G}(s,\x;T,y)\, d\x
 =\mathscr{C}^{k+N}\bar{\G}(t,x;T,y),\qquad  k,N\geq 0.
\end{equation}
We refer to \cite{lpp-1.5} for detailed proofs.

Now, by equations \eqref{eqbound2}, \eqref{eqbound1} and Proposition \ref{properr3} we have
\begin{align*}
 &|p(t,x;T,y)-p^{(1)}(t,x;T,y)|&\\
 &\leq \sum_{n=0}^{\infty}
 \frac{C^{n+1}}{\sqrt{n!}} \int_t^T (T-s)^{\frac{n}{2}} \int_{\R}
 p^{(0)}(t,x;s,\x)\  \left(1+\|\lam_{1}\|_{\infty} \mathscr{C}^{n+1}\right)  \bar{\G}(s,\x;T,y)  d\x ds&\\
%
\intertext{(by Proposition \ref{lemestim1}
)} &\leq \sum_{n=0}^{\infty}
 \frac{C^{n+1}}{\sqrt{n!}} \int_t^T (T-s)^{\frac{n}{2}} \int_{\R}
 \bar{\G}(t,x;s,\x)\  \left(1+\|\lam_{1}\|_{\infty} \mathscr{C}^{n+1}\right)  \bar{\G}(s,\x;T,y)  d\x ds&\\
\intertext{(by the semi-group property \eqref{eqbound20})}
  & =2(T-t)
 \left( \sum_{n=0}^{\infty} \frac{C^{n+1}
 (T-t)^{\frac{n}{2}}}{\sqrt{n!}} \left(1+\|\lam_{1}\|_{\infty} \mathscr{C}^{n+1}\right)\bar{\G}(t,x;T,y) \right),
\end{align*}
for any $x,y\in\R$ and $t,T\in\R$ with $0\leq t<T \le \t$.  Since
$$
\sum_{n=0}^{\infty} \frac{C^{n+1}
 (T-t)^{\frac{n}{2}}}{\sqrt{n!}}\,  \mathscr{C}^{n+1}\bar{\G}(t,x;T,y) ,
$$ can be easily checked to be convergent, this concludes the proof.
\endproof

\bibliographystyle{chicago}
\bibliography{LPP-bib}

\clearpage
\begin{figure}
\centering
\begin{tabular}{cc}
$t=0.25$ & $t=1.00$ \\
\includegraphics[width=.495\textwidth,height=.325\textheight]{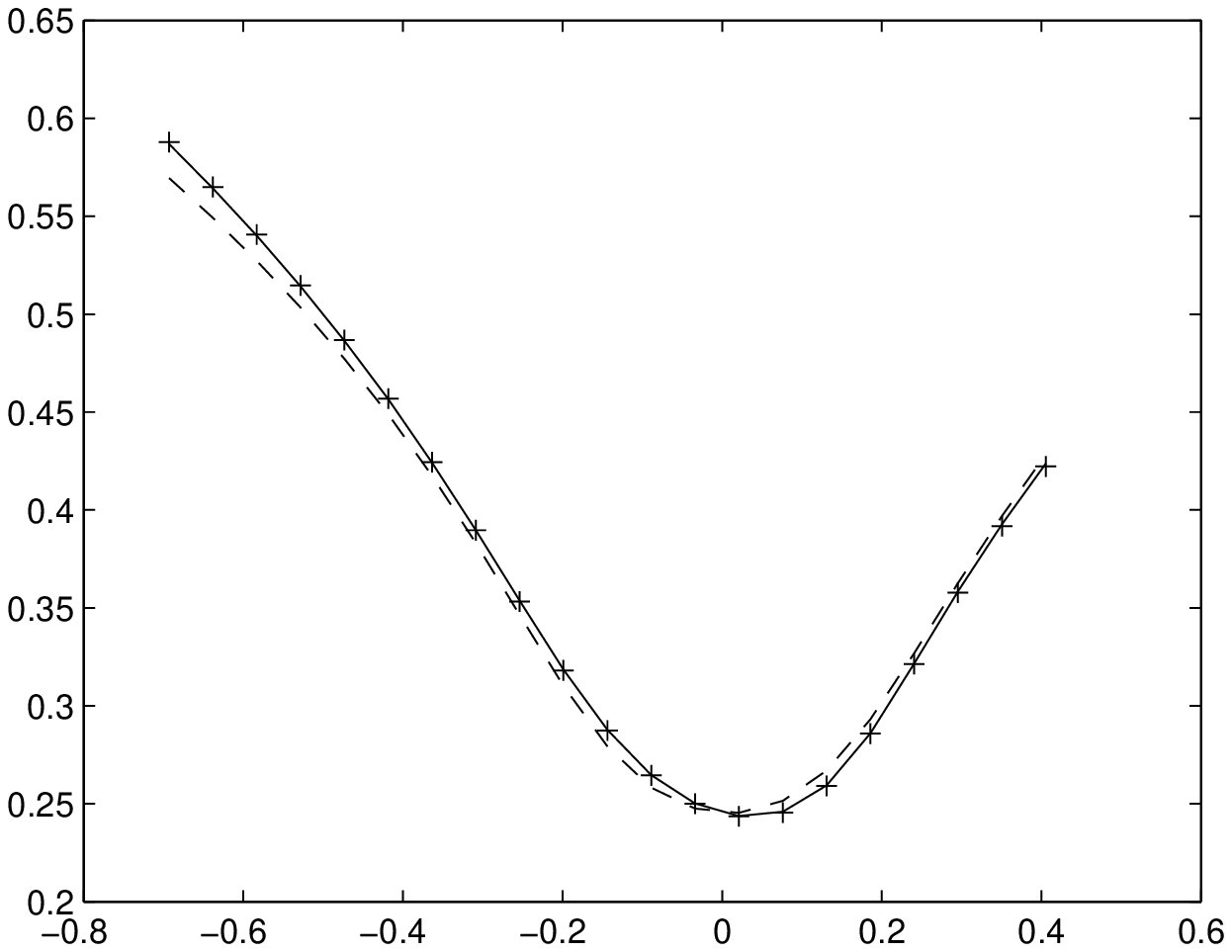} &
\includegraphics[width=.495\textwidth,height=.325\textheight]{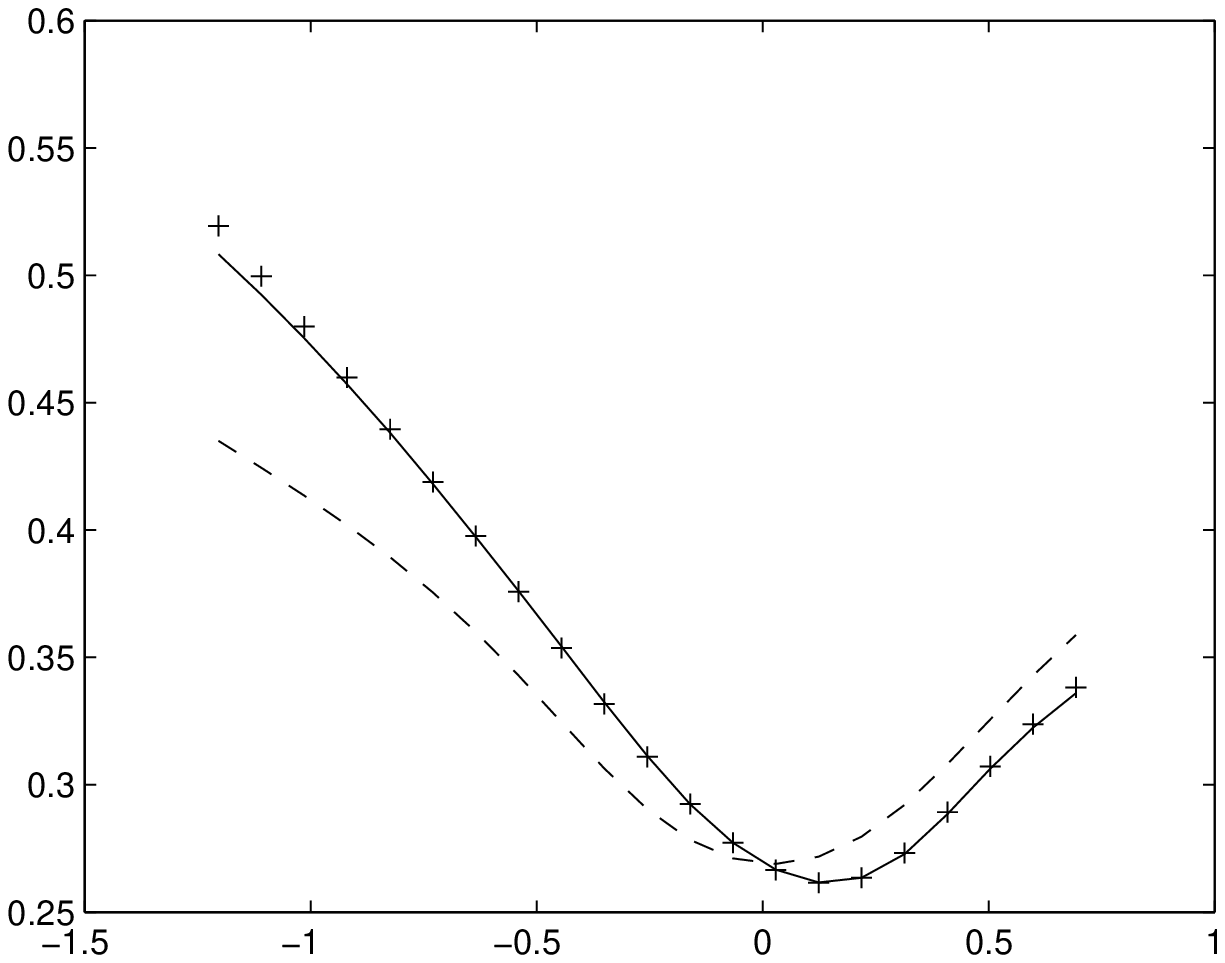} \\
$t=3.00$ & $t=5.00$\\
\includegraphics[width=.495\textwidth,height=.325\textheight]{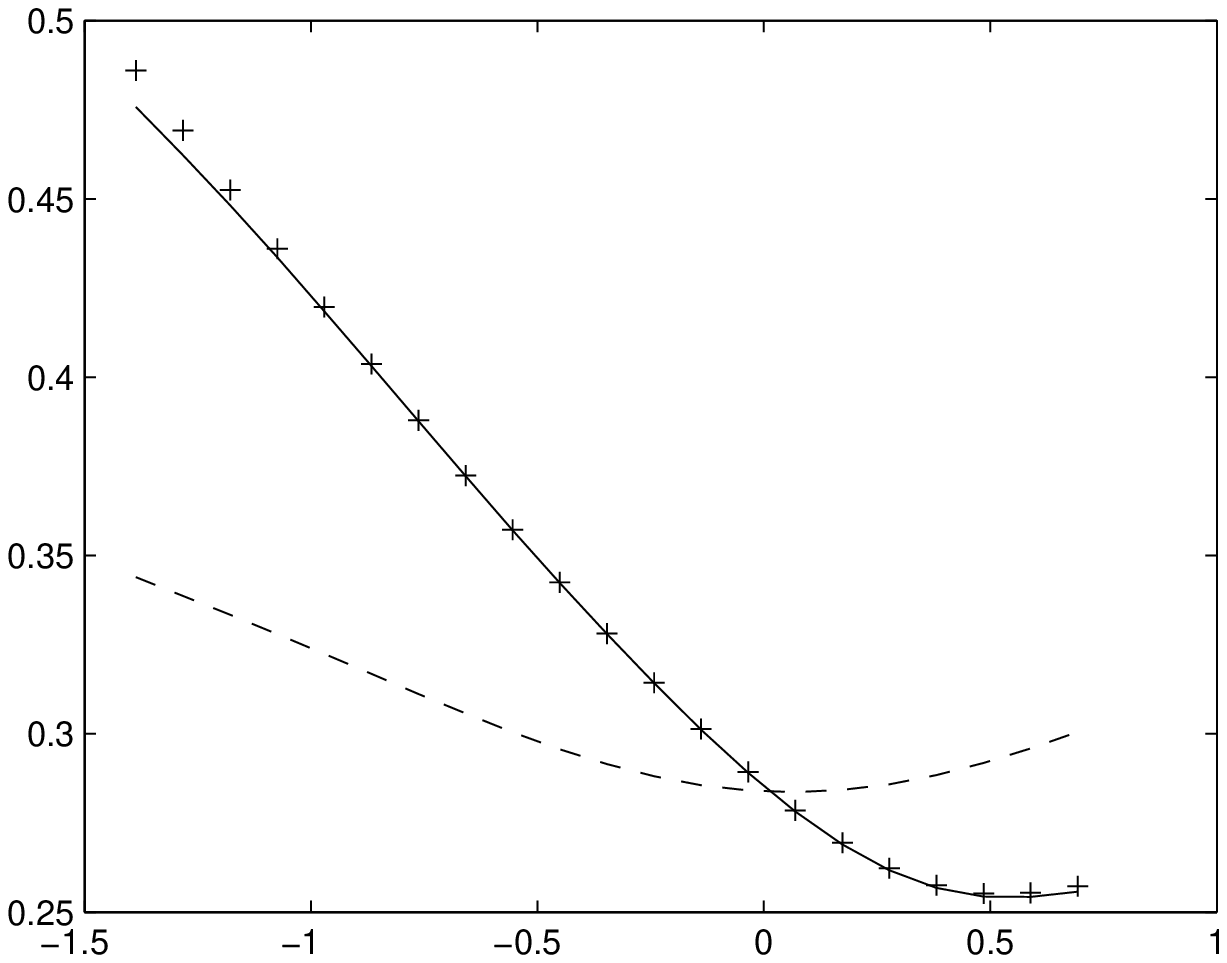} &
\includegraphics[width=.495\textwidth,height=.325\textheight]{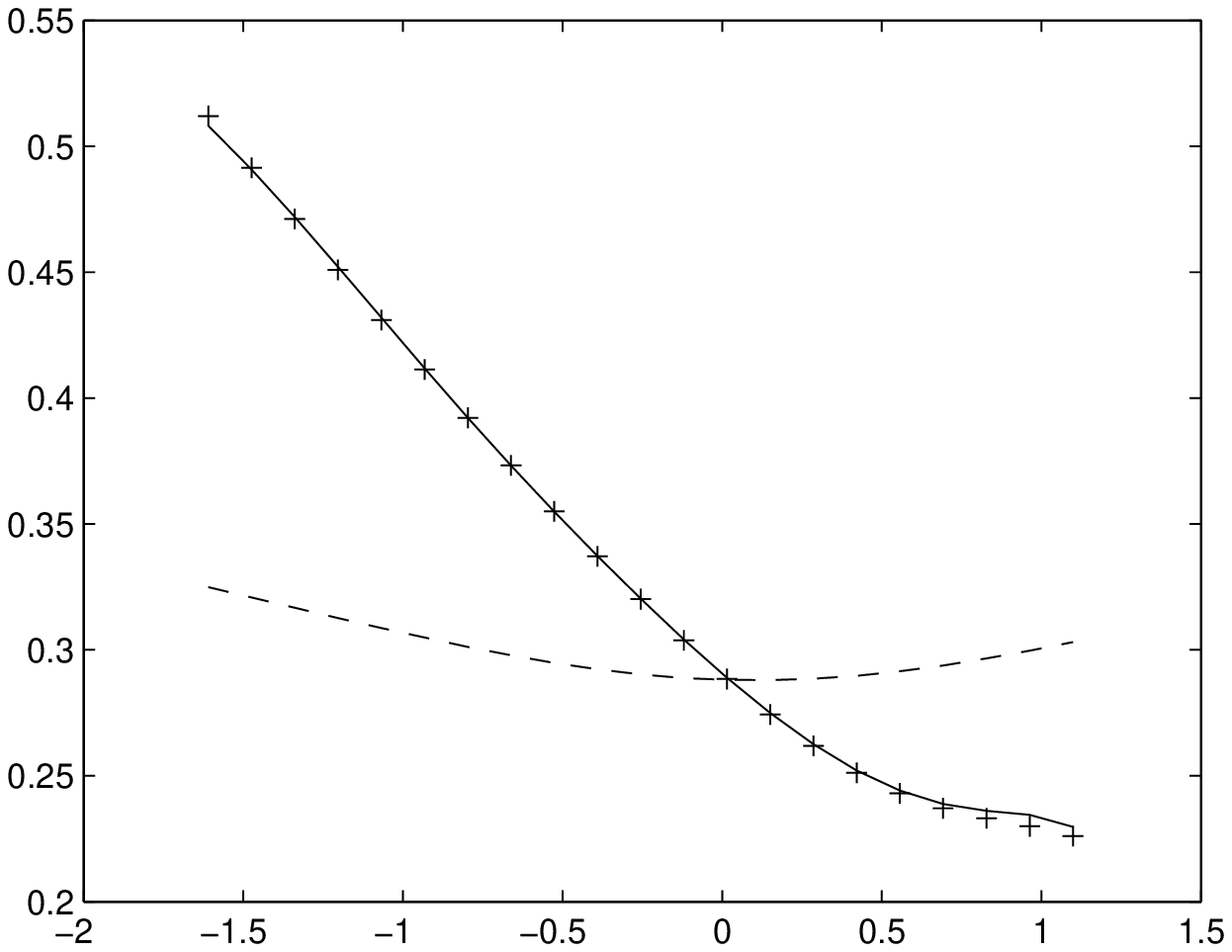}\\
\end{tabular}
\caption{Implied volatility (IV) is plotted as a function of $\log$-strike $k := \log K$ for the CEV-like model with Gaussian-type jumps of Section \ref{sec:Merton.CEV}.  The solid lines corresponds to the IV induced by $u^{(3)}(t,x)$, which is computed using the one-point Taylor expansion (see Example \ref{ex1}).  The dashed lines corresponds to the IV induced by $u^{(0)}(t,x)$ (again, using the usual one-point Taylor series expansion).  The crosses correspond to the IV induced by $u^{(MC)}(t,x)$, which is the price obtained from the Monte Carlo simulation.}
\label{fig:IV-Gauss-1}
\end{figure}

\clearpage
\begin{table*}
\centering
\begin{tabular}{c|c|cc|cc}
$t$ & $k$ & $u^{(3)}$ & $u$ MC-95\% c.i. & $\text{IV}[u^{(3)}]$ & IV MC-95\% c.i.  \\
\hline
\hline
    {}  & -0.6931  &  0.0006  &  0.0006  -  0.0007  &  0.5864  &  0.5856  -  0.5901 \\
    {}  & -0.4185  &  0.0024  &  0.0024  -  0.0025  &  0.4563  &  0.4553  -  0.4583 \\
    0.2500  & -0.1438  &  0.0111  &  0.0110  -  0.0112  &  0.2875  &  0.2865  -  0.2883 \\
    {}  &  0.1308  &  0.1511  &  0.1508  -  0.1513  &  0.2595  &  0.2573  -  0.2608 \\
    {}  &  0.4055  &  0.5028  &  0.5024  -  0.5030  &  0.4238  &  0.4152  -  0.4288 \\
\hline
    {}  & -1.2040  &  0.0009  &  0.0009  -  0.0010  &  0.5115  &  0.5176  -  0.5210 \\
    {}  & -0.7297  &  0.0046  &  0.0047  -  0.0048  &  0.4174  &  0.4178  -  0.4199 \\
    1.0000  & -0.2554  &  0.0314  &  0.0313  -  0.0316  &  0.3109  &  0.3102  -  0.3117 \\
    {}  &  0.2189  &  0.2781  &  0.2775  -  0.2784  &  0.2638  &  0.2620  -  0.2649 \\
    {}  &  0.6931  &  1.0034  &  1.0030  -  1.0041  &  0.3358  &  0.3296  -  0.3459 \\
\hline
    {}  & -1.3863  &  0.0074  &  0.0081  -  0.0083  &  0.4758  &  0.4851  -  0.4870 \\
    {} & -0.8664   & 0.0224  &  0.0224  -  0.0227  &  0.4031  &  0.4029  -  0.4045 \\
    3.0000  & -0.3466  &  0.0776  &  0.0773  -  0.0779  &  0.3280  &  0.3274  -  0.3288 \\
    {}  &  0.1733  &  0.3097  &  0.3094  -  0.3107  &  0.2690  &  0.2685  -  0.2703 \\
    {}  &  0.6931  &  1.0155  &  1.0150  -  1.0169  &  0.2558  &  0.2540  -  0.2604 \\
\hline
    {} &  -1.6094  &  0.0160  &  0.0164  -  0.0166  &  0.5082  &  0.5111  -  0.5128 \\
    {} &  -0.9324  &  0.0439  &  0.0436  -  0.0440  &  0.4118  &  0.4107  -  0.4121 \\
    5.0000 &  -0.2554  &  0.1504  &  0.1497  -  0.1507  &  0.3203  &  0.3194  -  0.3208 \\
    {} &   0.4216  &  0.6139  &  0.6123  -  0.6142  &  0.2521  &  0.2500  -  0.2524 \\
    {} &   1.0986  &  2.0050  &  2.0032  -  2.0057  &  0.2297  &  0.2163  -  0.2342 \\
\hline
\end{tabular}
\caption{Prices ($u$) and Implied volatility (IV[$u$]) as a function of time to maturity $t$ and $\log$-strike $k := \log K$ for the CEV-like model with Gaussian-type jumps of Section \ref{sec:Merton.CEV}.  The approximate price $u^{(3)}$ is computed using the (usual) one-point Taylor expansion (see Example \ref{ex1}).  For comparison, we provide the 95\% confidence intervals for prices and implied volatilities, which we obtain from the Monte Carlo simulation.}
\label{tab:IV-Gauss-1}
\end{table*}

\clearpage
\newgeometry{top=0.1cm,bottom=0.1cm}
\begin{figure}
\centering
\begin{tabular}{ccc}
$n=0$ &
\includegraphics[width=.35\textwidth,height=.15\textheight]{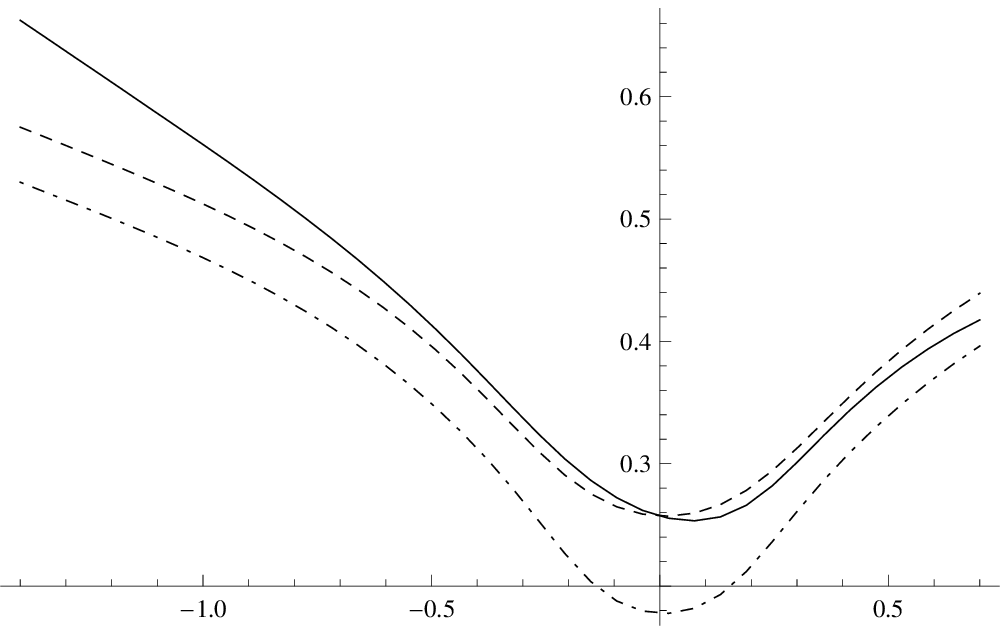} &
\includegraphics[width=.35\textwidth,height=.15\textheight]{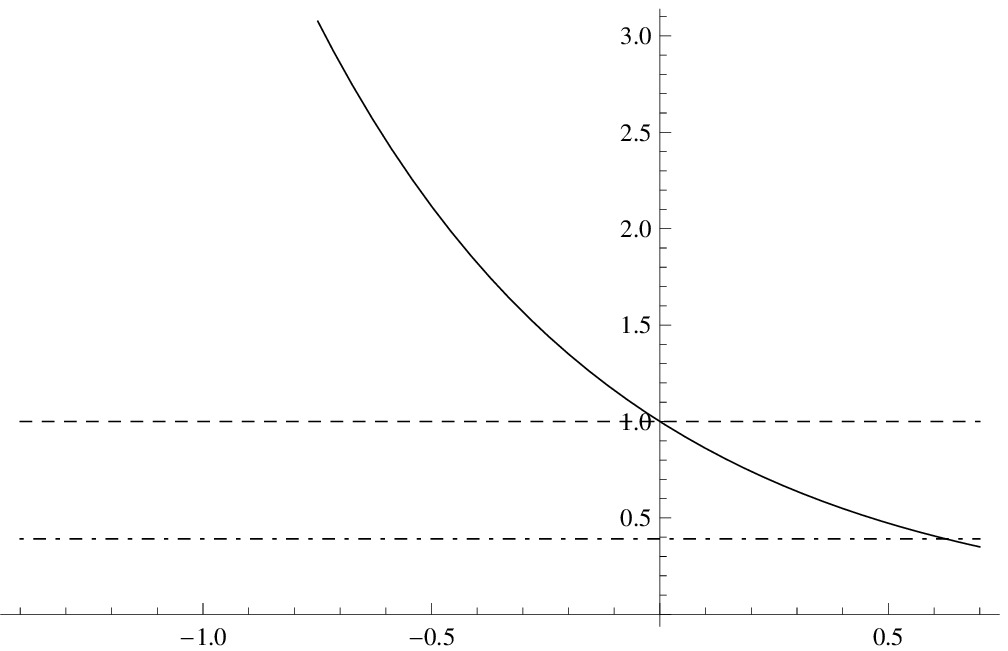} \\ \hline
$n=1$ &
\includegraphics[width=.35\textwidth,height=.15\textheight]{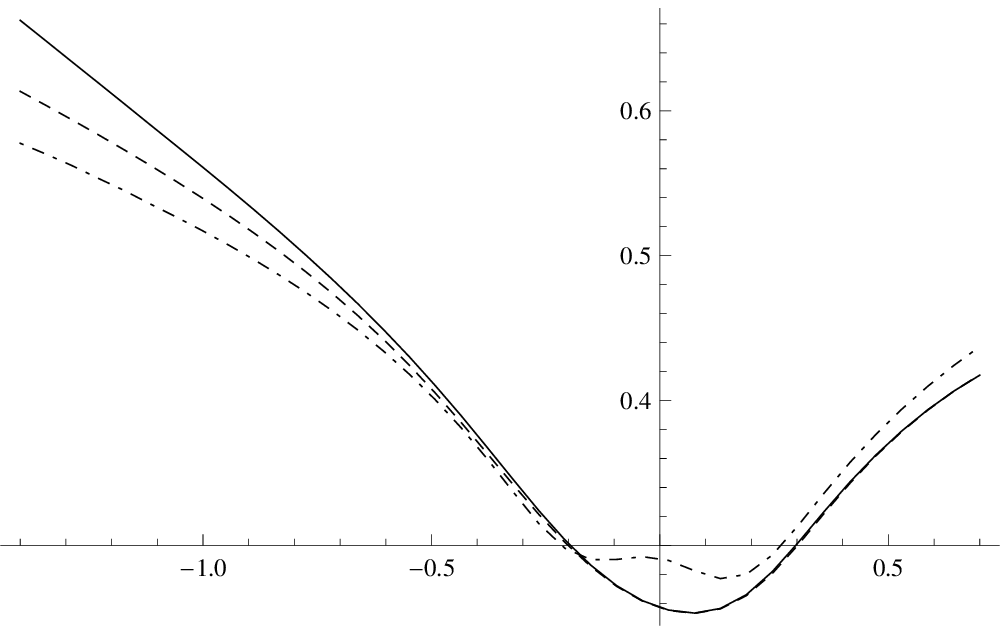} &
\includegraphics[width=.35\textwidth,height=.15\textheight]{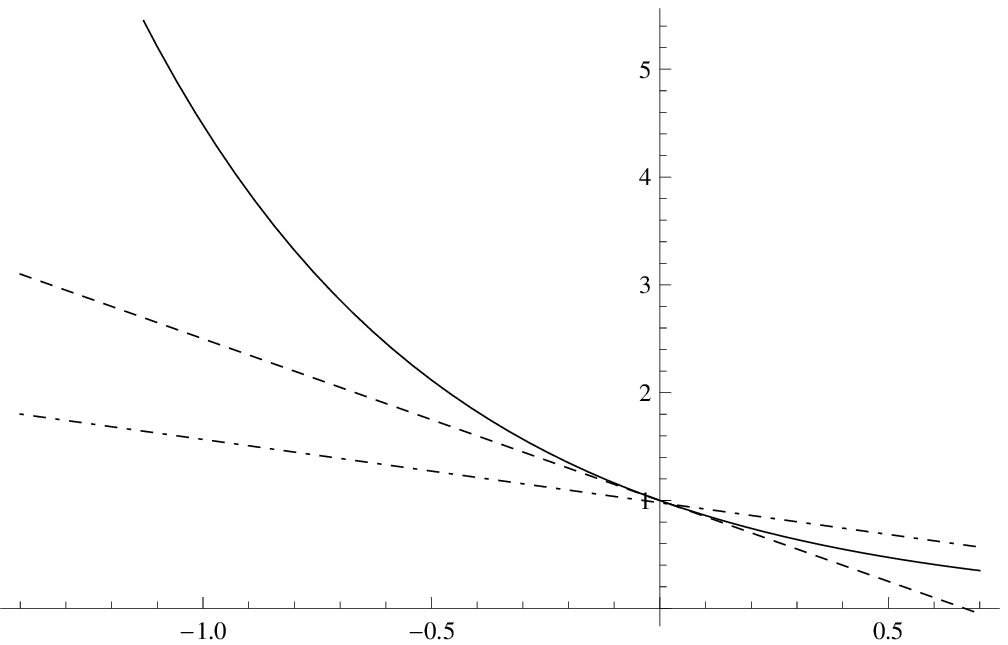} \\ \hline
$n=2$ &
\includegraphics[width=.35\textwidth,height=.15\textheight]{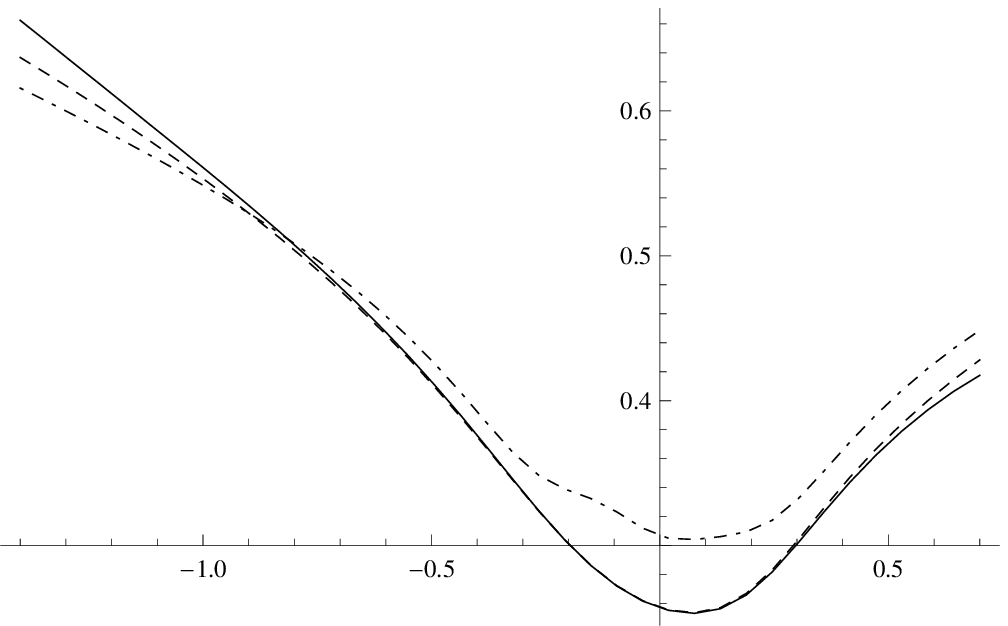} &
\includegraphics[width=.35\textwidth,height=.15\textheight]{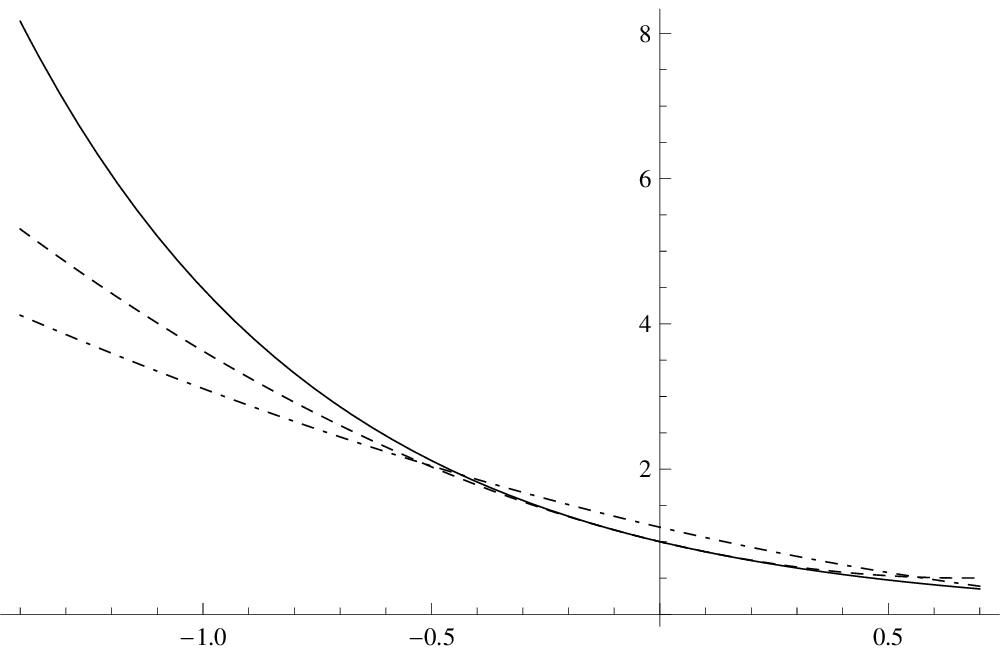} \\ \hline
$n=3$ &
\includegraphics[width=.35\textwidth,height=.15\textheight]{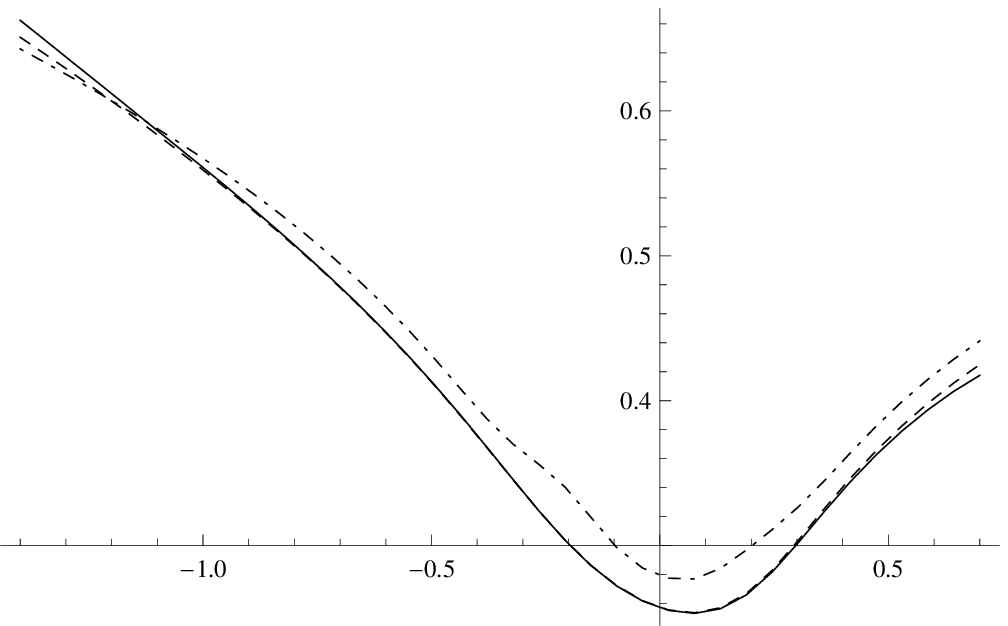} &
\includegraphics[width=.35\textwidth,height=.15\textheight]{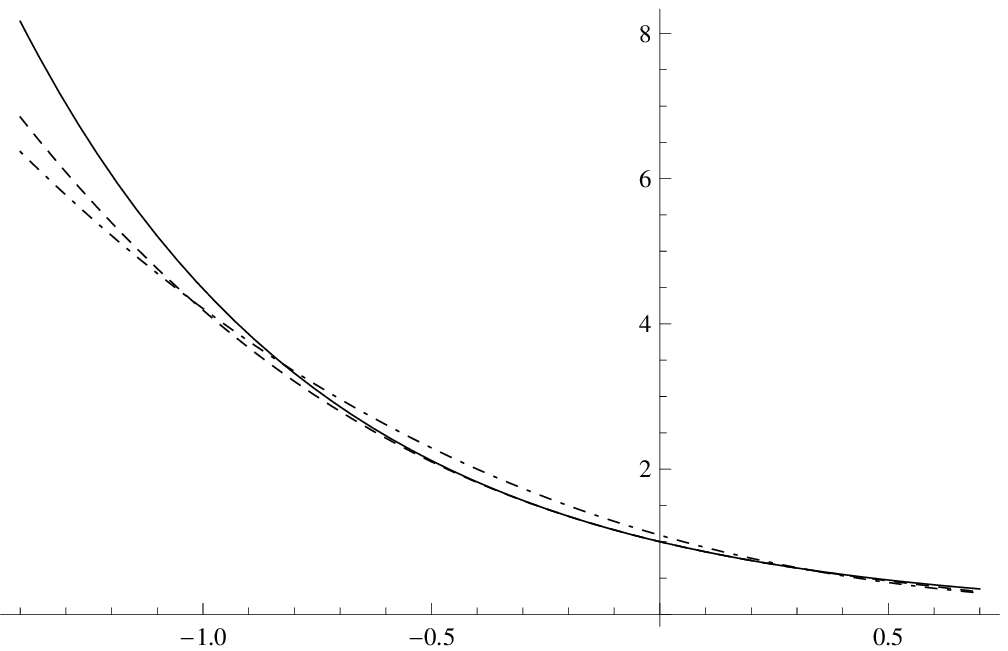} \\ \hline
$n=4$ &
\includegraphics[width=.35\textwidth,height=.15\textheight]{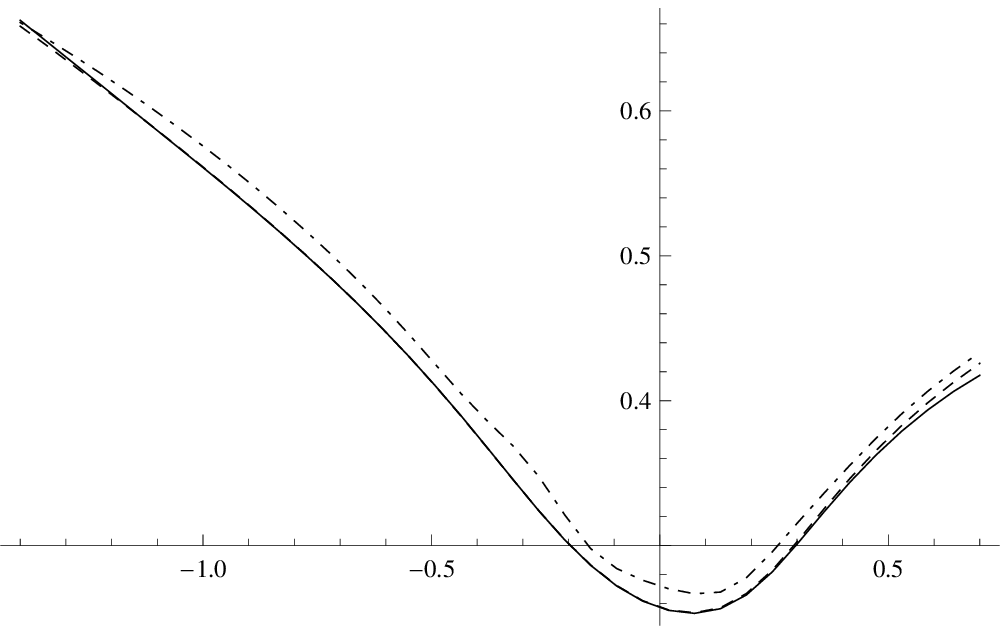} &
\includegraphics[width=.35\textwidth,height=.15\textheight]{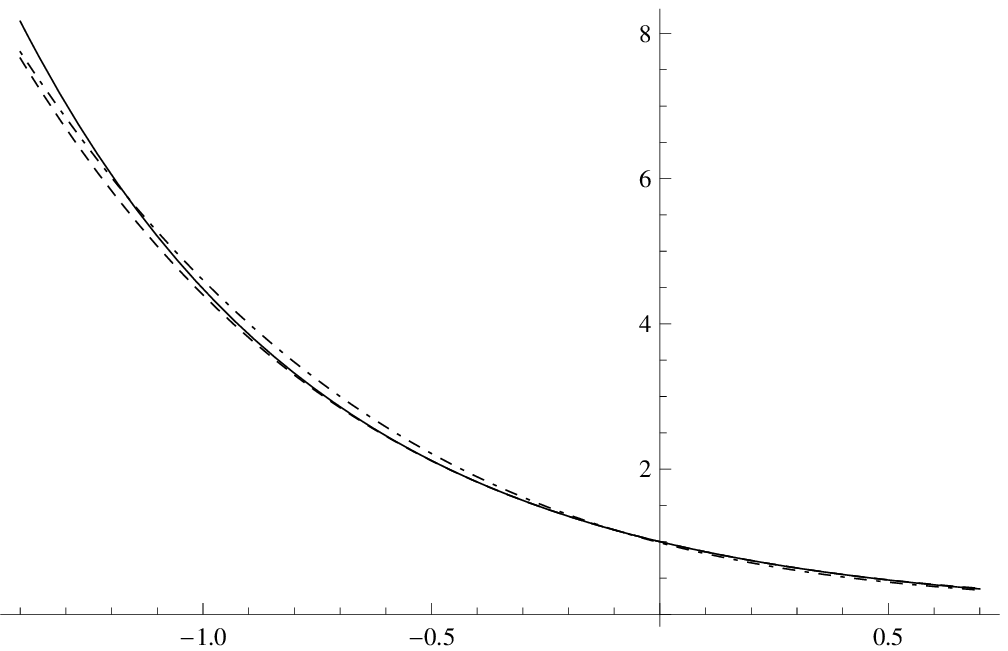} \\ \hline
\end{tabular}
\caption{Left: for the model considered in Section \ref{sec:Merton.CEV} and for a fixed maturity $t=0.5$, implied volatility is plotted as a function of $\log$-strike.  The dashed line corresponds to $\text{IV}[u^{(n)}]$ where $u^{(n)}$ is computed using Taylor series basis functions (Example \ref{ex1}).  The dot-dashed line corresponds to $\text{IV}[u^{(n)}]$ where $u^{(n)}$ is computed using Hermite polynomial basis functions (Example \ref{ex:L2}).  The solid line corresponds to $\text{IV}[u^{(MC)}]$.
Right: $f(x)=e^{2(\beta-1)x}$ (solid) and its $n$-th order Taylor series and Hermite polynomial approximations $f_\text{Taylor}^{(n)}(x)$ (dotted) and $f_\text{Hermite}^{(n)}(x)$ (dot-dashed); see equation \eqref{eq:fs}.}
\label{fig:OnePt-Hermite}
\end{figure}
\restoregeometry

\clearpage
\begin{table*}
\centering
$t = 0.25$ years\\
\begin{tabular}{c|c|cc|cc|c}
\hline
Parameters & $k = \log K$ & $u^{(3)}$ & $u$ MC-95\% c.i. & $\text{IV}[u^{(3)}]$ & IV MC-95\% c.i. & $\tau^{(3)}/\tau^{(0)}$  \\

\hline
\hline          
    $\del = 0.5432$     & -0.6000  &  0.4552  &  0.4552  -  0.4553  &  0.6849  &  0.6836  -  0.6869 & \\
    $\beta = 0.3756$  & -0.3500  &  0.3123  &  0.3122  -  0.3124  &  0.6230  &  0.6217 -   0.6242 & \\
    $\lam = 0.0518$     & -0.1000  &  0.1621  &  0.1618  -  0.1623  &  0.5704  &  0.5687  -  0.5714 & $4.9787$ \\
    $m = -0.5013$       &  0.1500  &  0.0496  &  0.0492  -  0.0500  &  0.5240  &  0.5222  -  0.5266 & \\
    $\eta = 0.3839$        &  0.4000  &  0.0059  &  0.0057  -  0.0067  &  0.4821  &  0.4787  -  0.4950 & \\
\hline          
    $\del = 0.1182$     & -0.6000  &  0.4566  &  0.4566  -  0.4567  &  0.7257  &  0.7239  -  0.7271 & \\
    $\beta = 0.9960$  & -0.3500  &  0.3137  &  0.3136  -  0.3139  &  0.6391  &  0.6378  -  0.6405 & \\
    $\lam = 0.8938$     & -0.1000  &  0.1431  &  0.1429  -  0.1434  &  0.4615  &  0.4602  -  0.4630 & $4.77419$ \\
    $m = -0.4486$       &  0.1500  &  0.0032  &  0.0030  -  0.0037  &  0.2013  &  0.1970  -  0.2073 & \\
    $\eta = 0.2619$        &  0.4000  &  0.0000  &  0.0000  -  0.0000  &  0.2510  &  0.2567  -  0.2616 & \\
\hline          
    $\del = 0.3376$     & -0.6000  &  0.4621  &  0.4619  -  0.4621  &  0.8462  &  0.8439  -  0.8478 & \\
    $\beta = 0.4805$  & -0.3500  &  0.3190  &  0.3189  -  0.3192  &  0.6949  &  0.6933  -  0.6968 & \\
    $\lam = 0.9610$     & -0.1000  &  0.1578  &  0.1575  -  0.1581  &  0.5457  &  0.5444  -  0.5476 & $4.31915$ \\
    $m = -0.2420$       &  0.1500  &  0.0451  &  0.0448  -  0.0456  &  0.4990  &  0.4974  -  0.5021 & \\
    $\eta = 0.5391$        &  0.4000  &  0.0155  &  0.0152  -  0.0162  &  0.6006  &  0.5981  -  0.6080 & \\
\hline          
    $\del = 0.2469$     & -0.6000  &  0.4592  &  0.4591  -  0.4593  &  0.7871  &  0.7857  -  0.7900 & \\
    $\beta = 0.1875$  & -0.3500  &  0.3100  &  0.3099  -  0.3102  &  0.5965  &  0.5950  -  0.5986 & \\
    $\lam = 0.4229$     & -0.1000  &  0.1341  &  0.1338  -  0.1343  &  0.4083  &  0.4069  -  0.4096 & $4.46032$ \\
    $m = -0.2823$       &  0.1500  &  0.0306  &  0.0302  -  0.0309  &  0.4149  &  0.4126  -  0.4168 & \\
    $\eta = 0.7564$        &  0.4000  &  0.0176  &  0.0171  -  0.0179  &  0.6213  &  0.6171  -  0.6244 & \\
\hline
\end{tabular}
\caption{After selecting model parameters randomly, we compute call prices ($u$) for the CEV-like
model with Gaussian-type jumps discussed in Section \ref{sec:Merton.CEV}.  For each strike, the
approximate call price $u^{(3)}$ is computed using the (usual) one-point Taylor expansion (see
Example \ref{ex1}) as well as by Monte Carlo simulation.  The obtained prices, as well as the
associated implied volatilities (IV[$u$]) are displayed above.  Note that,
the approximate price $u^{(3)}$ (and corresponding implied volatility) consistently falls within
the 95\% confidence interval obtained from the Monte Carlo simulation. We denote by $\tau^{(n)}$
the total time it takes to compute the $n$-th order approximation of option prices $u^{(n)}$ at the five strikes
displayed in the table.  Because total computation time depends on processor speed, in the last
column, we give the ratio $\tau^{(3)}/\tau^{(0)}$.  Note that $\tau^{(0)}$ is a useful benchmark,
as it corresponds to the total time it takes to compute the five call in an Exponential L\'evy
setting (i.e., option prices with no local dependence) using standard Fourier techniques.}
\label{tab:IV-Gauss-2}
\end{table*}

\clearpage
\begin{table*}
\centering
$t = 1.00$ years\\
\begin{tabular}{c|c|cc|cc|c}
\hline
Parameters & $k = \log K$ & $u^{(3)}$ & $u$ MC-95\% c.i. & $\text{IV}[u^{(3)}]$ & IV MC-95\% c.i. & $\tau^{(3)}/\tau^{(0)}$  \\
\hline
\hline          
    $\del = 0.5806$     & -1.0000  &  0.6487  &  0.6486  -  0.6488  &  0.7306  &  0.7294  -  0.7319 & \\
    $\beta = 0.5829$  & -0.6000  &  0.5001  &  0.5000  -  0.5004  &  0.6719  &  0.6711  -  0.6734 & \\
    $\lam = 0.0367$     & -0.2000  &  0.3220  &  0.3216  -  0.3224  &  0.6167  &  0.6157  -  0.6182 & $4.97872$ \\
    $m = -0.6622$       &  0.2000  &  0.1512  &  0.1507  -  0.1520  &  0.5649  &  0.5636  -  0.5671 & \\
    $\eta = 0.2984$        &  0.6000  &  0.0413  &  0.0408  -  0.0428  &  0.5166  &  0.5145  -  0.5219 & \\
\hline          
    $\del = 0.3921$     & -1.0000  &  0.6556  &  0.6555  -  0.6561  &  0.8022  &  0.8014  -  0.8075 & \\
    $\beta = 0.1271$  & -0.6000  &  0.5012  &  0.5011  -  0.5018  &  0.6779  &  0.6772  -  0.6809 & \\
    $\lam = 0.4176$     & -0.2000  &  0.3052  &  0.3051  -  0.3060  &  0.5655  &  0.5651  -  0.5678 & $4.54839$ \\
    $m = -0.1661$       &  0.2000  &  0.1188  &  0.1184  -  0.1198  &  0.4832  &  0.4822  -  0.4858 & \\
    $\eta = 0.5823$        &  0.6000  &  0.0299  &  0.0296  -  0.0315  &  0.4708  &  0.4694  -  0.4772 & \\
\hline          
    $\del = 0.5803$     & -1.0000  &  0.6679  &  0.6677  -  0.6681  &  0.9122  &  0.9108  -  0.9140 & \\
    $\beta = 0.2426$  & -0.6000  &  0.5237  &  0.5236  -  0.5243  &  0.7916  &  0.7913  -  0.7943 & \\
    $\lam = 0.5926$     & -0.2000  &  0.3436  &  0.3431  -  0.3441  &  0.6830  &  0.6814  -  0.6845 & $4.3125$ \\
    $m = -0.0877$       &  0.2000  &  0.1592  &  0.1581  -  0.1596  &  0.5851  &  0.5823  -  0.5862 & \\
    $\eta = 0.3236$        &  0.6000  &  0.0373  &  0.0358  -  0.0379  &  0.5009  &  0.4949  -  0.5033 & \\
\hline          
    $\del = 0.3096$     & -1.0000  &  0.6323  &  0.6323  -  0.6324  &  0.36740  &  0.3680  -  0.3708 & \\
    $\beta = 0.6417$  & -0.6000  &  0.4554  &  0.4553  -  0.4554  &  0.34493  &  0.3442  -  0.3456 & \\
    $\lam = 0.3806$     & -0.2000  &  0.2283  &  0.2281  -  0.2284  &  0.32159  &  0.3208  -  0.3221 & $4.9257$ \\
    $m = -0.02824$      &  0.2000  &  0.0495  &  0.0491  -  0.0500  &  0.29930  &  0.2980  -  0.3006 & \\
    $\eta = 0.0122$        &  0.6000  &  0.0021  &  0.0015  -  0.0027  &  0.27807  &  0.2655  -  0.2888 & \\
\hline
\end{tabular}
\caption{After selecting model parameters randomly, we compute call prices ($u$) for the CEV-like
model with Gaussian-type jumps discussed in Section \ref{sec:Merton.CEV}.  For each strike, the
approximate call price $u^{(3)}$ is computed using the (usual) one-point Taylor expansion (see
Example \ref{ex1}) as well as by Monte Carlo simulation.  The obtained prices, as well as the
associated implied volatilities (IV[$u$]) are displayed above.  Note that,
the approximate price $u^{(3)}$ (and corresponding implied volatility) consistently falls within
the 95\% confidence interval obtained from the Monte Carlo simulation. We denote by $\tau^{(n)}$
the total time it takes to compute the $n$-th order approximation of option prices $u^{(n)}$ at the five strikes
displayed in the table.  Because total computation time depends on processor speed, in the last
column, we give the ratio $\tau^{(3)}/\tau^{(0)}$.  Note that $\tau^{(0)}$ is a useful benchmark,
as it corresponds to the total time it takes to compute the five call in an Exponential L\'evy
setting (i.e., option prices with no local dependence) using standard Fourier techniques.}
\label{tab:IV-Gauss-3}
\end{table*}


\clearpage
\begin{figure}
\centering
\begin{tabular}{cc}
$t=0.50$ & $t=1.00$ \\
\includegraphics[width=.495\textwidth,height=.325\textheight]{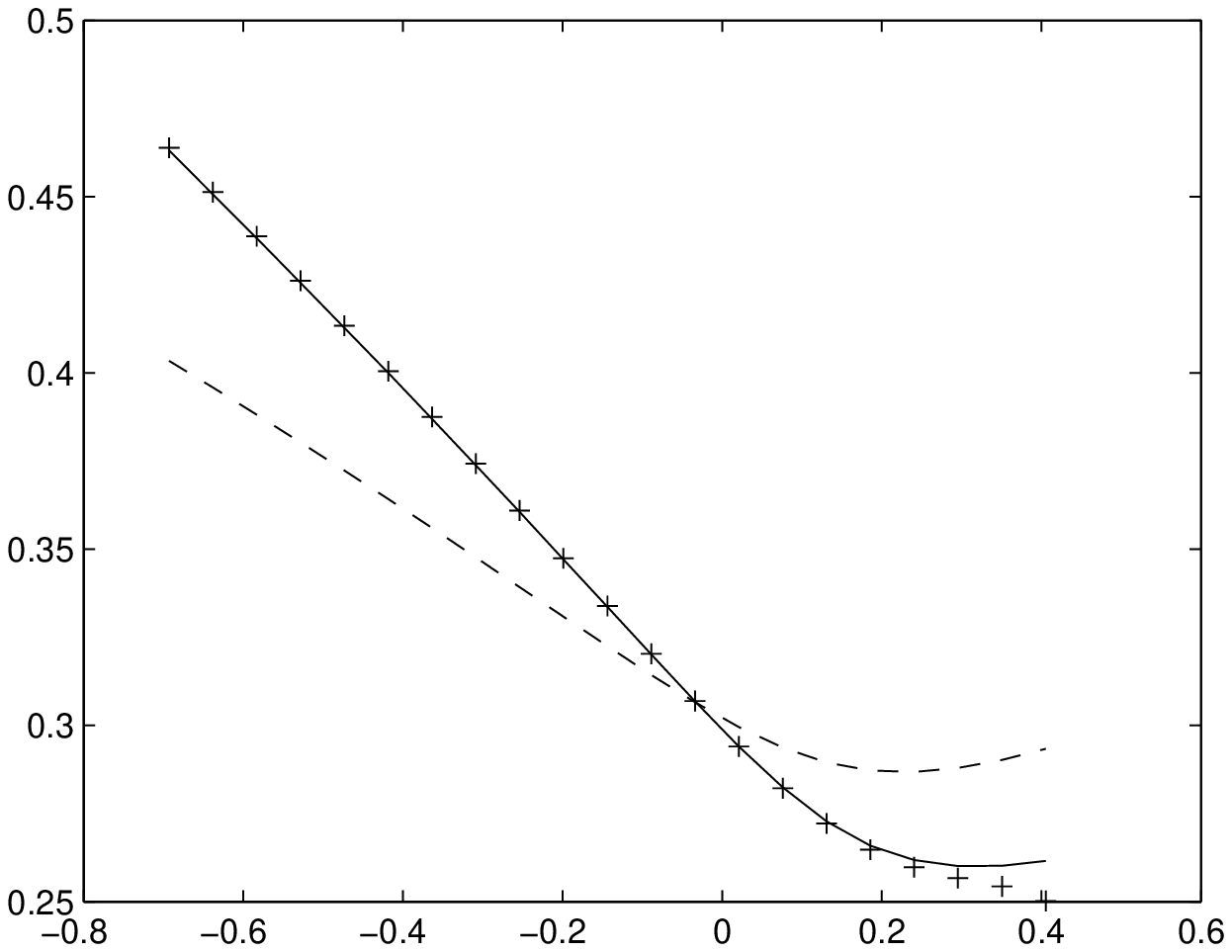} &
\includegraphics[width=.495\textwidth,height=.325\textheight]{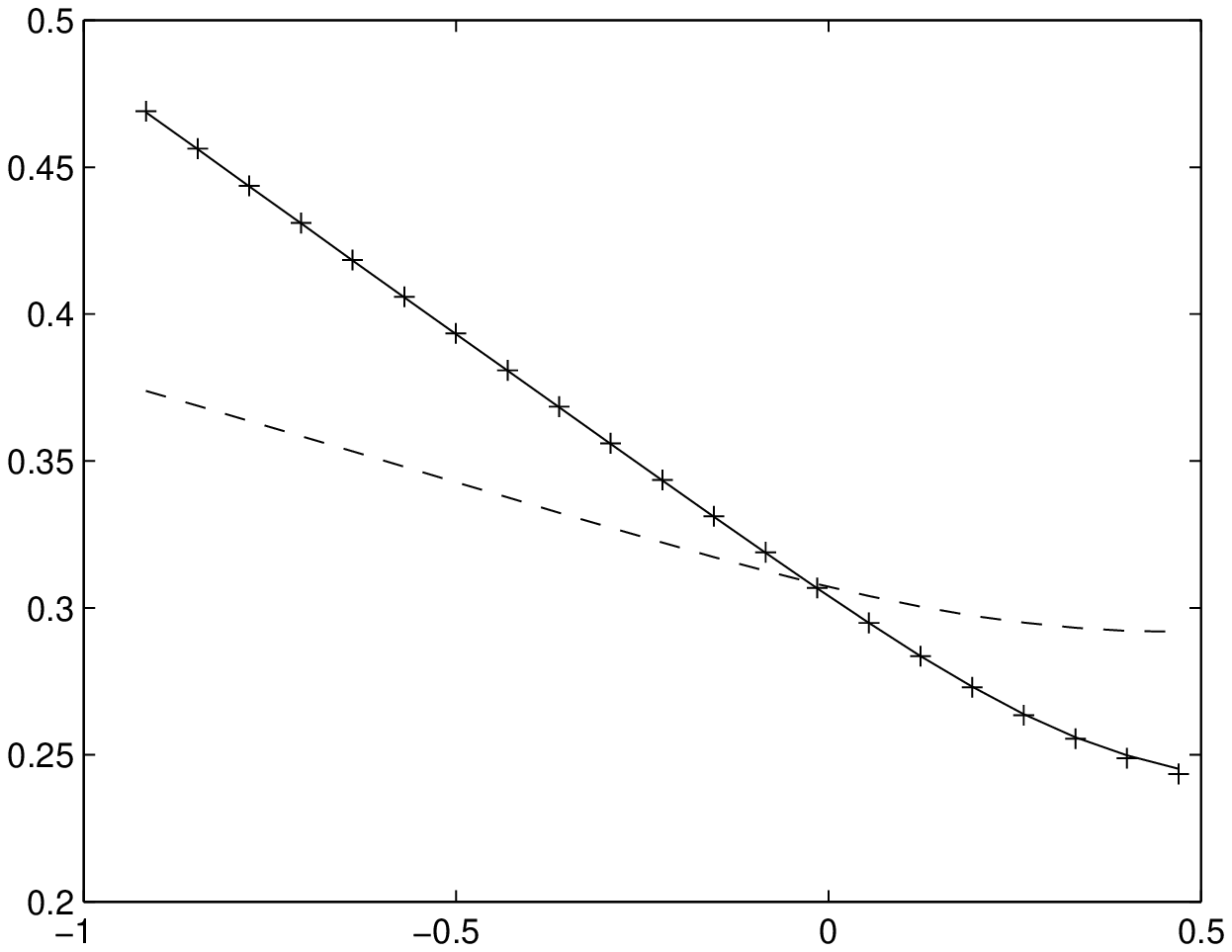} \\
\end{tabular}
\caption{Implied volatility (IV) is plotted as a function of $\log$-strike $k := \log K$ for the CEV-like model with Variance Gamma-type jumps of Section \ref{sec:VG.CEV}.  The solid lines corresponds to the IV induced by $u^{(2)}(t,x)$, which is computed using the two-point Taylor expansion (see Example \ref{ex:two-pt}).  The dashed lines corresponds to the IV induced by $u^{(0)}(t,x)$ (again, computed using the two-point Taylor series expansion).
The crosses correspond to the IV induced by $u^{(MC)}(t,x)$, which is the price obtained from the Monte Carlo simulation.}
\label{fig:IV-VG-MonteCarlo}
\end{figure}

\begin{table*}
\centering
\begin{tabular}{c|c|cc|cc}
$t$ & $k$ & $u^{(2)}$ & $u$ MC-95\% c.i. & $\text{IV}[u^{(2)}]$ & IV MC-95\% c.i.  \\
\hline
\hline
    {}  & -0.6931  &  0.0014  &  0.0014  -  0.0015  &  0.4631  &  0.4624  -  0.4652 \\
    {}  & -0.4185  &  0.0070  &  0.0070  -  0.0071  &  0.4000  &  0.3995  -  0.4014 \\
    0.5000  & -0.1438  &  0.0363  &  0.0362  -  0.0365  &  0.3336  &  0.3331  -  0.3346 \\
    {}  &  0.1308  &  0.1702  &  0.1697  -  0.1704  &  0.2727  &  0.2707  -  0.2736 \\
    {}  &  0.4055  &  0.5011  &  0.5004  -  0.5012  &  0.2615  &  0.2291  -  0.2646 \\
\hline
    {}  & -0.9163  &  0.0028  &  0.0027  -  0.0028  &  0.4687  &  0.4678  -  0.4702 \\
    {}  & -0.5697  &  0.0109  &  0.0109  -  0.0110  &  0.4057  &  0.4050  -  0.4068 \\
    1.0000  & -0.2231  &  0.0473  &  0.0472  -  0.0476  &  0.3434  &  0.3428  -  0.3444 \\
    {}  &  0.1234  &  0.1970  &  0.1965  -  0.1974  &  0.2836  &  0.2825  -  0.2847 \\
    {}  &  0.4700  &  0.6033  &  0.6025  -  0.6037  &  0.2452  &  0.2355  -  0.2506 \\
\hline
\end{tabular}
\caption{Prices ($u$), Implied volatilities (IV[$u$]) and the corresponding confidence intervals from Figure \ref{fig:IV-VG-MonteCarlo}.}
\label{tab:IV-VG-MonteCarlo1}
\end{table*}

\end{document}